\theoremstyle{plain}
\newtheorem{theorem}{Theorem}[section]
\newtheorem{prop}[theorem]{Proposition}
\theoremstyle{definition}
\newtheorem{definition}{Definition}[section]
\newtheorem{remark}{Remark}[section]
\begin{document}

\title[Smooth operators and B-P-B property]{Smooth points in operator spaces and some Bishop-Phelps-Bollob$\acute{A}$s type theorems in Banach spaces}
\author[Debmalya Sain]{Debmalya Sain}

\newcommand{\acr}{\newline\indent}

\address{\llap{\,}Department of Mathematics\acr
                              Indian Institute of Science\acr
															Bengaluru\acr
															Karnataka 560012\acr
                              INDIA}
\email{saindebmalya@gmail.com}

\thanks{The research of the author is sponsored by Dr. D. S. Kothari Postdoctoral Fellowship. The author feels elated to acknowledge the blissful and motivating presence of his junior brother Mr. Anubhab Ray, in every sphere of his life!} 

\subjclass[2010]{Primary 46B20, Secondary 46B04, 47L05.}
\keywords{Banach space; norm attainment; Bishop-Phelps-Bollob$\acute{a}$s property; smooth operators}

\begin{abstract}
We introduce the notion of approximate norm attainment set of a bounded linear operator between Banach spaces and use it to obtain a complete characterization of smooth points in the space of compact linear operators, provided the domain space is reflexive and Kadets-Klee. We also apply the concept to characterize strong BPB property (sBPBp) of a pair of Banach spaces. We further introduce uniform $ \epsilon- $BPB approximation of a bounded linear operator and uniform strong BPB property (uniform sBPBp) with respect to a given family of norm one linear operators and explore some of the relevant properties to illustrate its connection with earlier studies on Bishop-Phelps-Bollob$\acute{a}$s type theorems in Banach spaces. It is evident that our study has deep connections with the study of smooth points in operator spaces. We obtain a complete characterization of uniform sBPBp for a pair of Banach spaces, with respect to a given family of norm one bounded linear operators between them. As the final result of this paper, we prove that if $ \mathbb{X} $ is a reflexive Kadets-Klee Banach space and $ \mathbb{Y} $ is any Banach space, then the pair $ (\mathbb{X},\mathbb{Y}) $ has sBPBp for compact operators. Our results extend, complement and improve some of the earlier results in this context.     
\end{abstract}

\maketitle

\section{Introduction.}
Bishop-Phelps theorem \cite{B}, unarguably one of the cornerstones of functional analysis, ensures that norm attaining functionals are dense in the dual of any Banach space, real or complex. The possibility of generalization of this profound result to the vector valued case has been studied by several mathematicians \cite{A,Da,Db}. The primary purpose of the present paper is to further explore the geometry of the space of bounded linear operators between Banach spaces with the same motivation. We illustrate that our study has natural connections with the study of smooth operators between Banach spaces. Before proceeding any further, let us fix the notations and the terminologies to be used throughout the paper.\\
 Let $ \mathbb{X}, $ $ \mathbb{Y} $ be Banach spaces. We work with only real Banach spaces of dimension greater than $ 1. $ Let $ B_{\mathbb{X}} = \{x \in \mathbb{X} : \|x\| \leq 1\} $ and $ S_{\mathbb{X}} = \{x \in \mathbb{X} : \|x\|=1\} $ be the unit ball and the unit sphere of $ \mathbb{X} $ respectively. Given any $ x \in \mathbb{X} $ and any $ r > 0, $ let $ B(x,r) $ denote the open ball with centre at $ x $ and radius $ r. $ Let $ \mathbb{L}(\mathbb{X},\mathbb{Y})~ (\mathbb{K}(\mathbb{X},\mathbb{Y})) $ denote the Banach space of all bounded (compact) linear operators from $ \mathbb{X} $ to $ \mathbb{Y}, $ endowed with the usual operator norm. Let $ \mathbb{X}^{*} $ denote the dual space of $ \mathbb{X}. $ Given $ T \in \mathbb{L}(\mathbb{X},\mathbb{Y}), $ let $ M_T $ denote the norm attainment set of $ T, $ i.e., $ M_T = \{x \in S_{\mathbb{X}} : \|Tx\|=\|T\|\}. $ The structure of $ M_T $ plays an important role in the geometry of operator spaces \cite{S,PSG}. In particular, the smoothness of a compact linear operator on a reflexive smooth Banach space is completely determined by the corresponding norm attainment set \cite{PSG}. An element $ \theta \neq x \in \mathbb{X} $ is said to be a smooth point if there exists a unique linear functional $ f \in \mathbb{X}^{*} $ such that $ \| f \| = 1 $ and $ f(x) = \| x \|. $ We make use of the notion of Birkhoff-James orthogonality in Banach spaces, towards obtaining results in the spirit of Bishop-Phelps theorem (or, to justify the impossibility of the existence of such results), for bounded linear operators instead of bounded linear functionals. Given any two elements $ x,~y \in \mathbb{X}, $ we say that $ x $ is Birkhoff-James orthogonal to $ y, $ written as $ x \perp_{B} y, $ if $ \| x + \lambda y \| \geq \| x \| $ for all scalars $ \lambda. $ Let us observe that the concepts of smoothness and Birkhoff-James orthogonality are applicable to bounded linear operators by treating them as elements of the corresponding Banach space of all bounded linear operators (between the same pair of Banach spaces). We say that $ \mathbb{X} $ is strictly convex if every point of $ S_{\mathbb{X}} $ is an extreme point of $ B_{\mathbb{X}}. $ Furthermore, $ \mathbb{X} $ is said to be locally uniformly rotund (LUR) if for all $ x,~x_n \in S_{\mathbb{X}} $ satisfying $ \lim_{n \to \infty} \| x_n+x \| = 2, $ we have that $ \lim_{n \to \infty} \| x_n - x \| = 0. $ LUR property is evidently stronger than strict convexity, only in the infinite-dimensional case. We also recall that $ \mathbb{X} $ is said to be Kadets-Klee space if whenever $ \{x_n\} $ is a sequence in $ \mathbb{X} $ and $ x \in \mathbb{X} $ is such that $ x_n \stackrel{w}{\rightharpoonup} x $ and $ \lim_{n \to \infty} \| x_n \| = \| x \|, $ then $ \lim_{n \to \infty} \| x_n - x \| = 0. $

While studying the numerical range of a bounded linear operator, Bollob$\acute{a}$s presented a quantitative version of the Bishop-Phelps theorem by proving the following result \cite{Ba}:\\  

\textit{Let $ \epsilon > 0 $ be arbitrary. If $ x \in B_{\mathbb{X}} $ and $ x^{*} \in S_{\mathbb{X}^{*}} $ are such that $ |1 - x^{*}(x)| < \frac{\epsilon ^{2}} {4}, $ then there are elements $ y \in S_{\mathbb{X}} $ and $ y^{*} \in S_{\mathbb{X}^{*}}, $ such that $ y^{*}(y) = 1,~ \| y-x \| < \epsilon $ and $ \| y^{*}-x^{*} \| < \epsilon. $}\\

Motivated by this novel idea, Acosta et al. introduced the following definition in their seminal paper \cite{A}:

\begin{definition}
Let $ \mathbb{X} $ and $ \mathbb{Y} $ be real or complex Banach spaces. We say that the couple $ (\mathbb{X},\mathbb{Y}) $ satisfies the Bishop-Phelps-Bollob$\acute{a}$s property (BPBp in abbreviated form) for operators if given $ \epsilon > 0, $ there are $ \eta(\epsilon) > 0 $ and $ \beta(\epsilon) > 0 $ with  $ lim_{t \rightarrow 0} \beta(t) = 0 $ such that for all $ T \in S_{\mathbb{L}(\mathbb{X},\mathbb{Y})}, $ if $ x_{0} \in S_{\mathbb{X}} $ is such that $ \| Tx_{0} \| > 1 - \eta(\epsilon), $ then there exist a point $ u_{0} \in S_{\mathbb{X}} $ and an operator $ S \in S_{\mathbb{L}(\mathbb{X},\mathbb{Y})} $ that satisfy the following conditions:\\
$ \| Su_{0} \| =1,~ \| u_0 -x_0 \| < \beta(\epsilon) $ and $ \| S-T \| < \epsilon. $
\end{definition}

They also proved in the same paper that for finite-dimensional Banach spaces, a uniform version of the Bishop-Phelps-Bollob$\acute{a}$s theorem holds for operators. In view of their result, it seems natural to introduce the following definition in the study of Bishop-Phelps-Bollob$\acute{a}$s type theorems for bounded linear operators between Banach spaces:

\begin{definition}
Let $ \mathbb{X} $ and $ \mathbb{Y} $ be Banach spaces and $ T \in S_{\mathbb{L}(\mathbb{X},\mathbb{Y})}. $ Let $ \epsilon > 0 $ be fixed. We say that $ A \in S_{\mathbb{L}(\mathbb{X},\mathbb{Y})} $ is a uniform $ \epsilon- $BPB approximation of $ T $ if there exists $ \delta(\epsilon) > 0 $ such that if $ x_{0} \in S_{\mathbb{X}} $ is such that $ \| Tx_{0} \| > 1 - \delta(\epsilon), $ then there exist a point $ u_{0} \in S_{\mathbb{X}} $ satisfying the following conditions:\\
$ \| Au_{0} \| =1,~ \| u_0 -x_0 \| < \epsilon $ and $ \| A-T \| < \epsilon. $
\end{definition}

Following this definition, Proposition $ 2.4 $ of \cite{A} can be reformulated in the following way:\\

\textit{Let $ \mathbb{X} $ and $ \mathbb{Y} $ be finite-dimensional Banach spaces and $ T \in S_{\mathbb{L}(\mathbb{X},\mathbb{Y})}. $ Then for every $ \epsilon > 0, $ $ T $ has a uniform $ \epsilon- $BPB approximation.}\\ 

Several authors have further studied Bishop-Phelps-Bollob$\acute{a}$s type theorems for bounded linear operators, with additional restrictions. Dantas introduced the following two definitions in \cite{Da} and obtained some interesting Bishop-Phelps-Bollob$\acute{a}$s type theorems (or counterexamples) for bounded (compact) linear operators in various special cases. 

\begin{definition}
Let $ \mathbb{X} $ and $ \mathbb{Y} $ be Banach spaces. We say that the pair $ (\mathbb{X},\mathbb{Y}) $ has $ \emph{property 1} $ (also called strong BPB property, or, sBPBp in abbreviated form), if given $ \epsilon > 0 $ and $ T \in S_{\mathbb{L}(\mathbb{X},\mathbb{Y})}, $ there exists $ \eta(\epsilon,T) > 0 $ such that whenever $ x_0 \in S_{\mathbb{X}} $ satisfies $ \| Tx_0 \| > 1-\eta(\epsilon,T), $ there exists $ x_1 \in S_{\mathbb{X}} $ such that $ \| Tx_1 \| = 1 $ and $ \| x_1 - x_0 \| < \epsilon. $ If this property is satisfied for every norm one compact operator, then we say that the pair $ (\mathbb{X},\mathbb{Y}) $ has sBPBp for compact operators.
\end{definition}

\begin{definition}
Let $ \mathbb{X} $ and $ \mathbb{Y} $ be Banach spaces. We say that the pair $ (\mathbb{X},\mathbb{Y}) $ has $ \emph{property 2} $ (also called uniform strong BPB property, or, uniform sBPBp in abbreviated form), if given $ \epsilon > 0, $ there exists $ \eta(\epsilon) > 0 $ such that whenever $ T \in S_{\mathbb{L}(\mathbb{X},\mathbb{Y})} $ and $ x_0 \in S_{\mathbb{X}} $ are such that $ \| Tx_0 \| > 1-\eta(\epsilon), $ there exists $ x_1 \in S_{\mathbb{X}} $ such that $ \| Tx_1 \| = 1 $ and $ \| x_1 - x_0 \| < \epsilon. $ 
\end{definition}

In \cite{Da}, Dantas proved that if $ \mathbb{X} $ is a reflexive Banach space which is locally uniformly rotund (LUR) then the pair $ (\mathbb{X},\mathbb{Y}) $ has sBPBp for compact operators. On the other hand, Dantas posed the following question in \cite{D}:\\

\textit{Is it possible to give a characterization for the pair $ (\mathbb{X},\mathbb{Y}) $ to have sBPBp?}\\ 

As regards to uniform sBPBp, very recently Dantas et al. proved in \cite{Db} that the pair $ (\mathbb{X},\mathbb{Y}) $ may have uniform sBPBp only if one of the spaces $ \mathbb{X} $ and $ \mathbb{Y} $ is one-dimensional. In view of this powerful result, it seems natural to introduce the following definition:

\begin{definition}
Let $ \mathbb{X} $ and $ \mathbb{Y} $ be Banach spaces. Let $ \mathcal{F} $ be a family of norm one linear operators in $ \mathbb{L}(\mathbb{X},\mathbb{Y}). $ We say that the pair $ (\mathbb{X},\mathbb{Y}) $ has uniform sBPBp with respect to $ \mathcal{F} $ if given $ \epsilon > 0, $ there exists $ \eta(\epsilon) > 0 $ such that whenever $ T \in \mathcal{F} $ and $ x_0 \in S_{\mathbb{X}} $ are such that $ \| Tx_0 \| > 1-\eta(\epsilon), $ there exists $ x_1 \in S_{\mathbb{X}} $ such that $ \| Tx_1 \| = 1 $ and $ \| x_1 - x_0 \| < \epsilon. $ 
\end{definition} 

In this paper, we study BPBp and its variants, from the point of view of operator norm attainment. As it turns out, a natural generalization of the norm attainment set of a bounded linear operator seems necessary to proceed in this direction. This motivates us to introduce the concept of approximate norm attainment set of a bounded linear operator, in the following natural way:

\begin{definition}
Let $ \mathbb{X} $ and $ \mathbb{Y} $ be Banach spaces and $ T \in \mathbb{L}(\mathbb{X},\mathbb{Y}) $ be nonzero. Let $ 0 < \delta < \| T \|. $ The $ \delta $-approximate norm attainment set of $ T, $ $ M_{T}(\delta) $ is defined as $ M_{T}(\delta) = \{x \in S_{\mathbb{X}} : \| Tx \| > \| T \| - \delta \}. $
\end{definition} 

We first obtain a characterization of the smoothness of a bounded linear operator in terms of the approximate norm attainment set of the operator, provided the domain space is reflexive and Kadets-Klee. We observe that it is also possible to answer the above mentioned question raised by Dantas in \cite{D}, by using the notion of approximate norm attainment set. We next focus on the uniform $ \epsilon-$BPB approximation of a bounded linear operator and obtain several interesting results, involving the norm attainment set of the operator. First, we extend Proposition $ 2.4 $ of \cite{A} to compact operators on a reflexive Kadets-Klee Banach space. As a consequence of our study, we obtain a complete characterization of smooth operators in the finite-dimensional case, in terms of the existence of nontrivial uniform $ \epsilon- $BPB approximations which are also smooth. We also prove a result in the opposite direction by establishing that given any isometry in $ \mathbb{L}(l_{p}^{2}, l_{p}^{2}), $ it is the only uniform $ \epsilon- $BPB approximation of itself, for sufficiently small $ \epsilon > 0. $ We further study uniform sBPBp for a pair of Banach spaces, with respect to a fixed family of norm one linear operators between them. We prove that if $ \mathbb{X} $ is a strictly convex and smooth Banach space then counterexamples to uniform sBPBp can already be found in the class of all norm one smooth operators. In other words, we prove that if $ \mathbb{X} $ is a strictly convex and smooth Banach space then the pair $ (\mathbb{X},\mathbb{X}) $ does not have uniform sBPBp with respect to the family of norm one smooth operators in $ \mathbb{L}(\mathbb{X},\mathbb{X}). $ We would like to remark that this result complements the deep result obtained by Dantas et al. in \cite{Db}, regarding the impossibility of uniform sBPBp between any pair of Banach spaces, if both of them have dimension greater than $ 1. $ We obtain a complete characterization of uniform sBPBp for a pair of Banach spaces, with respect to a given family of norm one bounded linear operators between them. As the final result of this paper, we prove that if $ \mathbb{X} $ is a reflexive Kadets-Klee Banach space and $ \mathbb{Y} $ is any Banach space, then the pair $ (\mathbb{X},\mathbb{Y}) $ has sBPBp for compact operators. We would like to remark that Dantas proved a similar result in \cite{Da}, assuming that $ \mathbb{X} $ is reflexive and LUR, instead of the Kadets-Klee property. Since it is well-known that every LUR space is necessarily Kadets-Klee, our result evidently covers the analogous result proved by Dantas. We end this paper by a remark that the converse is not true and therefore, our result is a proper refinement of the corresponding result by Dantas in \cite{Da}.      
  
\section{ Main Results.}

Let us begin with some basic properties of the approximate norm attainment set of a bounded linear operator. 

\begin{prop}
Let $ \mathbb{X},~\mathbb{Y} $ be Banach spaces and $ T \in \mathbb{L}(\mathbb{X},\mathbb{Y}) $ be nonzero. Let $ 0 < \delta,\delta_1,\delta_2 < \| T \|. $ Then the following are true:\\
$ (i) $ $ M_{T}(\delta) $ is nonempty.\\
$ (ii) $ $ \delta_1 < \delta_2 \implies M_{T}(\delta_1) \subseteq M_{T}(\delta_2).  $ Moreover, if $ T $ is not a scalar multiple of an isometry, then there exists $ 0 < \delta_{1} < \delta_{2} $ such that $ M_{T}(\delta_1) \subsetneq M_{T}(\delta_2). $  \\
$ (iii) $ $ M_{T} = {\bigcap}_{0 < \delta < \| T \|} M_{T}(\delta). $\\
$ (iv) $ If $ \mathbb{X} $ is finite-dimensional then $ T $ is injective if and only if for some $ 0 < \delta < \| T \|, $ we have that, $ M_{T}(\delta) = S_{\mathbb{X}}. $ This is not necessarily true if $ \mathbb{X} $ is infinite-dimensional.
\end{prop}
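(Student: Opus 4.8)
The plan is to handle the four parts in order, noting that $(i)$, $(iii)$, and the first half of $(ii)$ fall out immediately from the description of the operator norm as a supremum, so that the real content lies in the strictness assertion of $(ii)$ and in $(iv)$.

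For $(i)$ I would simply recall that $ \| T \| = \sup_{x \in S_{\mathbb{X}}} \| Tx \| $; by the definition of the supremum, for any $ \delta > 0 $ there is $ x \in S_{\mathbb{X}} $ with $ \| Tx \| > \| T \| - \delta $, so $ x \in M_{T}(\delta) $. For the monotonicity in $(ii)$, if $ \delta_1 < \delta_2 $ then $ \| T \| - \delta_1 > \| T \| - \delta_2 $, whence $ \| Tx \| > \| T \| - \delta_1 $ forces $ \| Tx \| > \| T \| - \delta_2 $, giving $ M_{T}(\delta_1) \subseteq M_{T}(\delta_2) $. For $(iii)$ I would observe that $ x \in \bigcap_{0 < \delta < \| T \|} M_{T}(\delta) $ means $ \| Tx \| > \| T \| - \delta $ for every such $ \delta $; letting $ \delta \to 0^{+} $ yields $ \| Tx \| \geq \| T \| $, hence $ \| Tx \| = \| T \| $ and $ x \in M_{T} $, while the reverse inclusion is clear since $ x \in M_{T} $ gives $ \| Tx \| = \| T \| > \| T \| - \delta $ for all $ \delta > 0 $.

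The main obstacle is the strictness claim in $(ii)$. Here the key observation is that $ T $ is a scalar multiple of an isometry precisely when $ \| Tx \| = \| T \| $ for every $ x \in S_{\mathbb{X}} $. Thus when $ T $ is not such a multiple, I first need to produce a unit vector $ u_0 $ with $ 0 < \| Tu_0 \| < \| T \| $: if the range of $ x \mapsto \| Tx \| $ on $ S_{\mathbb{X}} $ avoided the open interval $ (0, \| T \|) $, it would be contained in $ \{0, \| T \|\} $ with $ \| T \| $ attained; were $ \ker T $ trivial this would force $ \| Tx \| = \| T \| $ for all unit $ x $ (the excluded isometry case), whereas if $ \ker T \neq \{0\} $ I take a unit $ x $ with $ \| Tx \| = \| T \| $ and a unit $ z \in \ker T $ and renormalize $ x + tz $ for $ t $ large enough that $ \| x + tz \| > 1 $, obtaining a unit vector with image norm $ \| T \| / \| x + tz \| \in (0, \| T \|) $, a contradiction. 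Having secured $ u_0 $ with $ \alpha := \| Tu_0 \| \in (0, \| T \|) $, I set $ \beta = \| T \| - \alpha $ and choose $ \delta_1 \in (0, \beta] $ and $ \delta_2 \in (\beta, \| T \|) $; then $ u_0 \in M_{T}(\delta_2) \setminus M_{T}(\delta_1) $, so $ M_{T}(\delta_1) \subsetneq M_{T}(\delta_2) $.

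Finally, for $(iv)$ I would note that $ M_{T}(\delta) = S_{\mathbb{X}} $ means $ \| Tx \| > \| T \| - \delta $ for every unit $ x $, so in particular $ \inf_{x \in S_{\mathbb{X}}} \| Tx \| \geq \| T \| - \delta > 0 $; the implication to injectivity is then immediate and needs no dimension hypothesis, since $ \| Tx \| > 0 $ for all unit $ x $. For the converse I would invoke finite-dimensionality: compactness of $ S_{\mathbb{X}} $ together with continuity of $ x \mapsto \| Tx \| $ ensures the infimum $ m $ is attained, injectivity gives $ m > 0 $, and any $ \delta \in (\| T \| - m, \| T \|) $ then yields $ M_{T}(\delta) = S_{\mathbb{X}} $. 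For the failure in the infinite-dimensional case I would exhibit the diagonal compact operator $ T $ on $ \ell_2 $ with $ Te_n = e_n/n $: it is injective with $ \| T \| = 1 $, yet $ \| Te_n \| = 1/n \to 0 $ shows $ \inf_{x \in S_{\mathbb{X}}} \| Tx \| = 0 $, so $ M_{T}(\delta) \subsetneq S_{\mathbb{X}} $ for every $ \delta \in (0,1) $. I expect the renormalization step in $(ii)$ to be the only place requiring genuine care; everything else is routine bookkeeping with the supremum definition.
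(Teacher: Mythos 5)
Your proof is correct and, on the only part the paper argues in detail (part $(iv)$), takes essentially the same route: compactness of $S_{\mathbb{X}}$ gives an attained minimum norm $m$, injectivity is equivalent to $m>0$, one picks $\delta$ with $\|T\|-\delta<m$, and the same diagonal operator $T(a_n)=(\frac{1}{n}a_n)$ on $\ell_2$ witnesses the infinite-dimensional failure. The paper dismisses $(i)$--$(iii)$ as trivial, so your careful renormalization argument producing a unit vector $u_0$ with $0<\|Tu_0\|<\|T\|$ for the strictness claim in $(ii)$ is a welcome (and correct) filling of a gap the paper leaves to the reader.
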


\begin{proof}
Every statement in Proposition $ 2.1 $ is trivial, except perhaps the last one. We note that since $ \mathbb{X} $ is finite-dimensional, we have that, $ S_{\mathbb{X}} $ is compact. Therefore, every bounded linear operator in $ \mathbb{L}(\mathbb{X},\mathbb{Y}) $ attains its minimum norm (say, $ k_{T} $) on $ S_{\mathbb{X}}, $ i.e., there exists $ x_0 \in S_{\mathbb{X}} $ such that $ \| Tx_0 \| = k_{T} = inf~ \{\| Tz \| : z \in S_{\mathbb{X}}\}.$ It is easy to observe that $ T \in \mathbb{L}(\mathbb{X},\mathbb{Y}) $ is injective if and only if $ k_{T} > 0. $ Now, if we choose $ 0 < \delta < \|T\|  $ to be such that $ \|T\| - \delta < k_{T}, $ then it is easy to observe that $ M_{T}(\delta) = S_{\mathbb{X}}. $ On the other hand, if $ 0 < \delta < \|T\|  $ is such that $ M_{T}(\delta) = S_{\mathbb{X}} $ then it follows immediately that $ k_{T} > 0. $ This establishes the first part of $ (iv). $\\  

To see that the last part of $ (iv) $ also holds true, consider $ T : \ell_2 \longrightarrow \ell_2 $ defined by $ T(a_n) = (\frac{1}{n}a_n). $ It is easy to observe that $ T $ is linear and $ \| T \|=1. $ Moreover, $ T $ is injective. However, a quick glance at the action of $ T $ on the canonical basis of $ \ell_2 $ ensures that there does not exist any $ \delta \in (0,1) $ such that $ M_{T}(\delta) = S_{\mathbb{X}}.$
\end{proof}

We now obtain a complete characterization of the smooth points in $ \mathbb{K}(\mathbb{X},\mathbb{Y}), $ where $ \mathbb{X} $ is a reflexive Kadets-Klee Banach space and $ \mathbb{Y} $ is a smooth Banach space, in terms of the approximate norm attainment set. We will use the following result, that follows from Theorem $ 4.1 $ and Theorem $ 4.2 $ of \cite{PSG}: \\

\begin{theorem}
Let $ \mathbb{X} $ be a reflexive Banach space and $ \mathbb{Y} $ be a smooth Banach space. Then $ T \in \mathbb{K}(\mathbb{X},\mathbb{Y}) $ is a smooth point if and only if  $ M_{T} = \{ \pm x_{0} \}, $ for some $ x_{0} \in S_{\mathbb{X}}. $ Moreover, for the ``only if" part, smoothness of $ \mathbb{Y} $ is not required.
\end{theorem}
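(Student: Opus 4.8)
The plan is to reduce the smoothness of the compact operator $ T $ to the geometry of its norm attainment set $ M_T $, using two ingredients. First, the canonical construction of supporting functionals: whenever $ x \in M_T $ and $ g \in S_{\mathbb{Y}^{*}} $ satisfies $ g(Tx) = \| Tx \| = \| T \| $ (which exists by Hahn-Banach), the functional $ F_{x,g} \in \mathbb{K}(\mathbb{X},\mathbb{Y})^{*} $ defined by $ F_{x,g}(A) = g(Ax) $ satisfies $ |F_{x,g}(A)| \leq \| A \| $ and $ F_{x,g}(T) = \| T \| $, hence has norm one and supports $ T $. Second, the description of Birkhoff-James orthogonality of compact operators on a reflexive space in terms of orthogonality of images, which I expect to be exactly the content imported from \cite{PSG,S}: for reflexive $ \mathbb{X} $ and $ T \in \mathbb{K}(\mathbb{X},\mathbb{Y}) $, one has $ T \perp_{B} A $ if and only if there exists $ x \in M_T $ with $ Tx \perp_{B} Ax $; the reverse implication is immediate from $ \| T + \lambda A \| \geq \| Tx + \lambda Ax \| \geq \| Tx \| = \| T \| $. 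I would also record that $ M_T $ is nonempty (a weakly convergent maximizing sequence, available by reflexivity, is sent by the compact $ T $ to a norm convergent one) and symmetric.

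For the ``only if'' part, which does not require smoothness of $ \mathbb{Y} $: assume $ T $ is smooth, so its supporting functional is unique. Given any $ x_0, x_1 \in M_T $, the functionals $ F_{x_0,g_0} $ and $ F_{x_1,g_1} $ both support $ T $ and therefore coincide. Evaluating them on rank-one operators $ Az = h(z)\,y $, with $ h \in \mathbb{X}^{*} $ and $ y \in \mathbb{Y} $, gives $ h(x_0)g_0(y) = h(x_1)g_1(y) $ for all $ h, y $; choosing $ y $ with $ g_0(y) = 1 $ forces $ h(x_0) = g_1(y) h(x_1) $ for every $ h \in \mathbb{X}^{*} $, so $ x_0 $ is a scalar multiple of $ x_1 $, whence $ x_0 = \pm x_1 $. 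Since $ M_T $ is nonempty and symmetric, this yields $ M_T = \{ \pm x_0 \} $.

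For the ``if'' part, assume $ \mathbb{Y} $ is smooth and $ M_T = \{ \pm x_0 \} $. I would use James's characterization that $ T $ is a smooth point if and only if Birkhoff-James orthogonality is right-additive at $ T $, i.e. $ T \perp_{B} A $ and $ T \perp_{B} B $ imply $ T \perp_{B} (A+B) $. So suppose $ T \perp_{B} A $ and $ T \perp_{B} B $. By the orthogonality characterization there are witnesses in $ M_T = \{ \pm x_0 \} $; since $ \perp_{B} $ is homogeneous in each argument, in both cases the witness may be taken to be the common point $ x_0 $, giving $ Tx_0 \perp_{B} Ax_0 $ and $ Tx_0 \perp_{B} Bx_0 $. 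As $ \| Tx_0 \| = \| T \| $, the point $ Tx_0 $ is a smooth point of $ \mathbb{Y} $, so $ \perp_{B} $ is right-additive at $ Tx_0 $, and hence $ Tx_0 \perp_{B} (Ax_0 + Bx_0) = (A+B)x_0 $. Applying the easy direction of the orthogonality characterization with $ x_0 \in M_T $ gives $ T \perp_{B} (A+B) $, so $ T $ is smooth.

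The hard part, and the genuine analytic content, is the orthogonality characterization $ T \perp_{B} A \Leftrightarrow \exists\, x \in M_T : Tx \perp_{B} Ax $ for compact $ T $ on reflexive $ \mathbb{X} $, whose nontrivial direction rests on norm attainment together with a weak-compactness argument; I am importing this from \cite{PSG}. Granting it, the decisive structural point is that the singleton shape $ M_T = \{ \pm x_0 \} $ forces both orthogonality relations $ T \perp_{B} A $ and $ T \perp_{B} B $ to be witnessed at the \emph{same} point $ x_0 $. This common witness is precisely what allows right-additivity at $ Tx_0 $ (supplied by smoothness of $ \mathbb{Y} $) to propagate up to right-additivity at $ T $; were $ M_T $ larger, the witnesses for $ A $ and $ B $ could differ and the argument would collapse, which also explains why some control on the size of $ M_T $ is indispensable.
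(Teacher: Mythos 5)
Your proposal is necessarily a different route from the paper's, because the paper gives no proof of this statement at all: it is imported wholesale as a consequence of Theorems $4.1$ and $4.2$ of \cite{PSG}. What you have written is, in effect, a reconstruction of the argument in that reference, and it is essentially correct. The ``only if'' half --- uniqueness of the supporting functional forces the functionals $F_{x,g}$ to coincide, evaluation on rank-one operators then shows any two points of $M_T$ are antipodal or equal, and reflexivity plus compactness supplies $M_T \neq \emptyset$ --- is clean and correctly avoids any smoothness assumption on $\mathbb{Y}$. The ``if'' half via James's right-additivity criterion, with the singleton shape of $M_T$ forcing a common witness $x_0$ so that smoothness of $Tx_0$ can be lifted to smoothness of $T$, is also the standard argument. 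The one thing you must repair is the statement of the imported orthogonality lemma: the equivalence ``$T \perp_{B} A$ if and only if there exists $x \in M_T$ with $Tx \perp_{B} Ax$'' is \emph{false} for a general compact $T$ on a reflexive space. For example, take $T = \mathrm{id} : \ell_{\infty}^{2} \rightarrow \ell_{2}^{2}$ and $A(x,y) = (y,0)$; then $M_T = \{\pm(1,1), \pm(1,-1)\}$, one checks $\| T + \lambda A \| \geq \sqrt{2} = \| T \|$ for all $\lambda$ (the point $(1,1)$ works for $\lambda \geq 0$ and $(1,-1)$ for $\lambda \leq 0$), yet $\langle Tu, Au \rangle = \pm 1 \neq 0$ at every $u \in M_T$, so no single point of $M_T$ witnesses the orthogonality. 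The correct statement in \cite{PSG} and \cite{S} requires $M_T = D \cup (-D)$ with $D$ compact and connected; in general one only gets two witnesses, one for each sign of $\lambda$. Since you invoke the hard direction only when $M_T = \{ \pm x_0 \}$, which is the connected case, your proof survives intact --- but the auxiliary lemma as you state it is overstated, and your own closing remark about why a singleton $M_T$ is indispensable is precisely the reason the unrestricted version fails.
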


An easy application of the above theorem yields the following result:

\begin{theorem}
Let $ \mathbb{X} $ be a reflexive Kadets-Klee Banach space and $ \mathbb{Y} $ be a smooth Banach space. Then $ T \in \mathbb{K}(\mathbb{X},\mathbb{Y}) $ is a smooth point if and only if there exists $ x_{0} \in S_{\mathbb{X}} $ such that given any $ \epsilon > 0, $ there exists $ \delta = \delta(\epsilon) > 0, $ satisfying the following:
\[ \textit{for any}~  0 < \delta \leq \delta(\epsilon),~  M_{T}(\delta) \subseteq B(x_0,\epsilon) \cup B(-x_0,\epsilon).  \]
Moreover, in the ``if" part of theorem, we do not require $ \mathbb{X} $ to be Kadets-Klee.
\end{theorem}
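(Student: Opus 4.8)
The plan is to use the preceding structural theorem (Theorem $2.2$) to convert the smoothness of $T$ into the condition $M_T = \{\pm x_0\}$, and then to bridge $M_T$ and the family $\{M_T(\delta)\}$ through Proposition $2.1$. Since replacing $T$ by $T/\|T\|$ affects neither the smoothness of $T$ nor the displayed shrinking condition (the family $\{M_T(\delta)\}$ is merely reparametrized, as $M_{T/\|T\|}(\delta') = M_T(\|T\|\,\delta')$), I would first reduce to the normalized case $\|T\| = 1$; this also disposes of the minor point that $M_T(\delta)$ is only defined for $0 < \delta < \|T\|$.

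For the ``if'' direction, which should not require the Kadets-Klee property, I would assume the existence of $x_0 \in S_{\mathbb{X}}$ with the stated shrinking property. Invoking $M_T = \bigcap_{0 < \delta < \|T\|} M_T(\delta)$ from Proposition $2.1(iii)$ together with the inclusion $M_T(\delta(\epsilon)) \subseteq B(x_0,\epsilon) \cup B(-x_0,\epsilon)$, I obtain $M_T \subseteq \bigcap_{\epsilon > 0}\bigl(B(x_0,\epsilon) \cup B(-x_0,\epsilon)\bigr) = \{x_0,-x_0\}$, where the last equality is an elementary separation argument: if $y$ differs from both $\pm x_0$, choosing $\epsilon$ below half the smaller of $\|y-x_0\|$ and $\|y+x_0\|$ excludes $y$ from the union. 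Because $M_T$ is symmetric ($x \in M_T \iff -x \in M_T$) and nonempty (a compact operator on a reflexive space attains its norm), it follows that $M_T = \{\pm x_0\}$, and the ``if'' part of Theorem $2.2$ then gives that $T$ is a smooth point. This is exactly the place where smoothness of $\mathbb{Y}$ is used.

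For the ``only if'' direction I would assume $T$ is smooth, so that $M_T = \{\pm x_0\}$ by Theorem $2.2$ (whose ``only if'' part does not need $\mathbb{Y}$ smooth), and argue by contradiction. If the shrinking condition failed, there would be $\epsilon_0 > 0$ and a sequence $x_n \in M_T(1/n)$ with $\|x_n - x_0\| \geq \epsilon_0$ and $\|x_n + x_0\| \geq \epsilon_0$ for all large $n$. Then $x_n \in S_{\mathbb{X}}$ and $\|Tx_n\| \to \|T\|$. Reflexivity furnishes a subsequence with $x_n \stackrel{w}{\rightharpoonup} x \in B_{\mathbb{X}}$; compactness of $T$ upgrades this to $Tx_n \to Tx$ in norm, whence $\|Tx\| = \|T\|$, which forces $\|x\| = 1$ and $x \in M_T = \{\pm x_0\}$. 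Finally, since $\|x_n\| = 1 = \|x\|$ and $x_n \stackrel{w}{\rightharpoonup} x$, the Kadets-Klee property yields $x_n \to x$ in norm, with $x$ equal to $x_0$ or $-x_0$; this contradicts the fact that $x_n$ stays at distance at least $\epsilon_0$ from both $x_0$ and $-x_0$.

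I expect the main obstacle to be precisely this concluding step of the ``only if'' direction, where the three hypotheses must be deployed in the correct order: reflexivity to extract a weak limit, compactness to certify that limit as a norm attaining vector (hence a member of the two-point set $M_T$), and the Kadets-Klee property to promote weak convergence to norm convergence and produce the contradiction. Everything remaining is routine bookkeeping with Proposition $2.1$ and Theorem $2.2$.
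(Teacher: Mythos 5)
Your proposal is correct and follows essentially the same route as the paper: reduce both directions to the statement $M_T=\{\pm x_0\}$ via Theorem $2.2$, and in the ``only if'' direction run the reflexivity--compactness--Kadets-Klee contradiction argument on a sequence staying outside $B(x_0,\epsilon_0)\cup B(-x_0,\epsilon_0)$. The only cosmetic difference is in the ``if'' part, where you invoke Proposition $2.1(iii)$ and the symmetry of $M_T$ directly instead of the paper's separation-by-contradiction with an extra point $w_0$; the content is the same.
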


\begin{proof}

Let us first prove the ``if" part of the theorem. We observe that since $ \mathbb{X} $ is reflexive and $ T $ is compact, $ M_T $ is nonempty. Let $ \pm x_0 \in M_T. $ We claim that $ M_T = \{ \pm x_0 \}. $ If possible, suppose that there exists $ w_0 \in M_T $ such that $ w_0 \neq \pm x_0. $ Let us choose $ \epsilon_{0} > 0 $ such that $ \epsilon_0 < \frac{1}{2} min\{ \| x_0 - w_0 \|,~\| x_0 + w_0 \| \}. $ Now, we argue that it is impossible to find some $ \delta(\epsilon_0) > 0 $ and some $ z_0 \in S_{\mathbb{X}} $ such that for any $ 0 < \delta \leq \delta(\epsilon_0), $ $ M_{T}(\delta) \subseteq B(z_0,\epsilon_0) \cup B(-z_0,\epsilon_0). $ Indeed, for this to hold, we must have, either of the following is true: \\

\noindent $ (i) $ $ x_0 $ and $ w_0 $ belongs to a ball centered at $ z_0 $ (or $ - z_0 $) and radius $ \epsilon_0. $\\
$ (ii) $ $ -x_0 $ and $ w_0 $ belongs to a ball centered at $ z_0 $ (or $ - z_0 $) and radius $ \epsilon_0. $\\

In the first case, $ \| x_0 - w_0 \| < 2\epsilon_0, $ whereas, in the second case, $ \| x_0 + w_0 \| < 2\epsilon_0. $ In both the cases, we arrive at a contradiction to our initial choice of $ \epsilon_0. $ This completes the proof of our claim. Let us now observe that since $ \mathbb{Y} $ is smooth, the ``if" part of the theorem follows directly from Theorem $ 2.2. $ We would like to further note that the Kadets-Klee property of $ \mathbb{X} $ is not required to complete the proof of this part of the theorem.\\

Let us now prove the ``only if" part. Since $ \mathbb{X} $ is reflexive and $ T $ is smooth, $ M_{T}=\{ \pm x_0 \}, $ for some $ x_0 \in S_{\mathbb{X}}. $ Clearly, $ T $ is nonzero. Let $ \epsilon > 0 $ be given arbitrarily. We claim that there exists $ \delta_{0} > 0 $ such that for any $ y \in S_{\mathbb{X}} \setminus (B(x_0,\epsilon) \cup B(-x_0,\epsilon)), $ we have, $ \| Ty \| < \| T \| - \delta_0.  $ If this does not hold true then there exists a sequence $ \{y_n\} $ in   $ S_{\mathbb{X}} \setminus (B(x_0,\epsilon) \cup B(-x_0,\epsilon)) $ such that $ \| Ty_n \| \rightarrow \| T \| $ as $ n \rightarrow \infty. $ Since $ \mathbb{X} $ is reflexive, $ B_{\mathbb{X}} $ is weakly compact. Therefore, without loss of generality, we may and do assume that  $ y_n \stackrel{w}{\rightharpoonup} y_0 \in B_{\mathbb{X}} $ (say). Since $ T $ is compact and norm is a continuous function, we can easily deduce that $ \| Ty_n \| \rightarrow \| Ty_0 \|. $ Since we have already assumed that $ \| Ty_n \| \rightarrow \| T \|, $ it follows that $ \| Ty_0 \| = \| T \|. $ At this point of the proof, we recall that $ y_0 \in B_{\mathbb{X}}. $ Therefore, $ \| Ty_0 \| = \| T \| $ implies that $ y_0 \in S_{\mathbb{X}}. $ Since $ M_T = \{ \pm x_0 \}, $ we must have, $ y_0 = \pm x_0. $ On the other hand, we note that $ y_n \stackrel{w}{\rightharpoonup} y_0 $ and $ \| y_n \| = \| y_0 \| = 1, $ for every natural number $ n. $ Since $ \mathbb{X} $ is Kadets-Klee, this implies that $ y_n \rightarrow y_0 $ in norm. However, this clearly leads to a contradiction, since each $ y_n, $ being an element of  $ S_{\mathbb{X}} \setminus (B(x_0,\epsilon) \cup B(-x_0,\epsilon)), $ is at a distance of at least $ \epsilon $ from either of $ \pm x_0. $ This contradiction completes the proof of our claim. Now, let us choose $ \delta(\epsilon) = \delta_0. $ It is clear from our construction that if $ z \in S_{\mathbb{X}} $ is such that $ \| Tz \| > \| T \| - \delta(\epsilon), $ then we must have, $ z \in B(x_0,\epsilon) \cup B(-x_0,\epsilon). $ The proof of the ``only if" part now follows from the Statement $ (ii) $ of Proposition $ 2.1. $ This establishes the theorem. 
\end{proof}

\begin{remark}
We would like to note that in the ``only if" part of the above theorem, smoothness of $ \mathbb{Y} $ is not required. However, in order to obtain a complete characterization of smooth operators, we do require the additional assumption of smoothness. It is  also worth mentioning that for the above theorem to be true, it suffices to assume that $ Tx_0 $ is a smooth point in $ \mathbb{Y}, $ instead of the global smoothness of $ \mathbb{Y}. $ Let us observe that since $ T $ cannot be the zero operator, $ Tx_0 $ is nonzero, and therefore, smoothness of $ \mathbb{Y} $ at the point $ Tx_0 $ makes sense.
\end{remark}

It is quite straightforward to observe that using the concept of approximate norm attainment set of a bounded linear operator, it possible to rephrase the strong BPB property for a pair of Banach spaces. This answers the question regarding a complete characterization of sBPBp, raised by Dantas in \cite{D}.

\begin{theorem}
Let $ \mathbb{X},~\mathbb{Y} $ be Banach spaces. Then the pair $ (\mathbb{X},\mathbb{Y}) $ has sBPBp if and only if given any $ \epsilon > 0, $ and any $ T \in \mathbb{L}(\mathbb{X},\mathbb{Y}), $ there exists $ \delta = \delta(\epsilon,T) > 0 $ such that\\
\[ M_{T}(\delta) \subseteq \bigcup_{x \in M_{T}} (B(x,\epsilon) \cap S_{\mathbb{X}}). \] 
\end{theorem}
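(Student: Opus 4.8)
The plan is to recognize that the displayed inclusion is simply the strong Bishop-Phelps-Bollob\'as condition transcribed into the language of approximate norm attainment sets, so that the proof reduces to an unwinding of the two definitions in each direction. First I would normalize the operator. Since sBPBp only quantifies over $ T \in S_{\mathbb{L}(\mathbb{X},\mathbb{Y})}, $ and since for any $ c > 0 $ one checks directly from the definitions that $ M_{cT} = M_{T} $ while $ M_{cT}(\delta) = M_{T}(\delta/c), $ the displayed condition for a nonzero operator $ T $ is equivalent to the same condition for $ T/\|T\| $ after rescaling $ \delta $ by the factor $ \|T\|. $ (For $ T = 0 $ the set $ M_{T}(\delta) $ is not defined and $ M_{T} = S_{\mathbb{X}}, $ so the inclusion is vacuously/trivially valid and contributes nothing.) Thus it suffices to prove the equivalence when $ \|T\| = 1, $ where $ M_{T}(\delta) = \{ x \in S_{\mathbb{X}} : \|Tx\| > 1-\delta \} $ and $ M_{T} = \{ x \in S_{\mathbb{X}} : \|Tx\| = 1 \}. $

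For the forward direction, I would assume $ (\mathbb{X},\mathbb{Y}) $ has sBPBp and fix $ \epsilon > 0 $ and $ T \in S_{\mathbb{L}(\mathbb{X},\mathbb{Y})}. $ Let $ \eta = \eta(\epsilon,T) > 0 $ be the constant supplied by the definition of sBPBp and put $ \delta = \eta. $ If $ x_0 \in M_{T}(\delta), $ then $ x_0 \in S_{\mathbb{X}} $ with $ \|Tx_0\| > 1-\eta, $ so sBPBp yields $ x_1 \in S_{\mathbb{X}} $ with $ \|Tx_1\| = 1 $ and $ \|x_1 - x_0\| < \epsilon; $ that is, $ x_1 \in M_{T} $ and $ x_0 \in B(x_1,\epsilon) \cap S_{\mathbb{X}}. $ Hence $ x_0 $ lies in the right-hand union, establishing the inclusion.

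Conversely, I would assume the displayed inclusion holds for every $ \epsilon $ and $ T, $ and again fix $ \epsilon > 0 $ and $ T \in S_{\mathbb{L}(\mathbb{X},\mathbb{Y})}. $ Choosing $ \delta > 0 $ as in the hypothesis and setting $ \eta(\epsilon,T) = \delta, $ any $ x_0 \in S_{\mathbb{X}} $ with $ \|Tx_0\| > 1-\eta = 1-\delta $ belongs to $ M_{T}(\delta), $ so the inclusion places $ x_0 $ in $ B(x,\epsilon) \cap S_{\mathbb{X}} $ for some $ x \in M_{T}. $ Taking $ x_1 = x $ gives $ \|Tx_1\| = 1 $ together with $ \|x_0 - x_1\| < \epsilon, $ which is exactly the conclusion demanded by sBPBp.

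Because each implication is a literal restatement, there is no genuine analytic obstacle here; the content lies entirely in having introduced the correct notion of $ M_{T}(\delta). $ The only points requiring mild care are the bookkeeping between the two families of constants $ \eta(\epsilon,T) $ and $ \delta(\epsilon,T) $ (they may simply be taken equal), the reduction to the normalized case $ \|T\| = 1 $ via the scaling identities above, and, if one insists on phrasing the criterion for arbitrary $ T \in \mathbb{L}(\mathbb{X},\mathbb{Y}), $ the trivial disposal of the degenerate operator $ T = 0. $
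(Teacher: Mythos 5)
Your proposal is correct and follows essentially the same route as the paper, whose proof simply notes that the equivalence is a direct unwinding of the definitions with $\eta(\epsilon,T)=\delta(\epsilon,T)$. Your additional care about rescaling from general $T$ to $\|T\|=1$ (via $M_{cT}(\delta)=M_{T}(\delta/c)$) and about the degenerate case $T=0$ is a reasonable tidying of a mismatch the paper glosses over, but it does not change the substance of the argument.
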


\begin{proof}
The proof follows quite trivially from the very definitions of sBPBp and $ M_{T}(\delta). $ Towards proving either the ``if" part or the ``only if" part, it suffices to choose $ \eta(\epsilon,T) = \delta(\epsilon,T). $
\end{proof}

Let us now focus on the uniform $ \epsilon- $BPB approximation of a bounded linear operator between Banach spaces. As mentioned in the introduction, our starting point in this aspect is Proposition $ 2.4 $ of \cite{A}. We would like to extend this result for compact operators defined on a reflexive Banach space with Kadets-Klee property. To do this, we prove that every compact operator defined on a reflexive Kadets-Klee Banach space is a uniform $ \epsilon- $BPB approximation of itself for any given $ \epsilon > 0. $

\begin{theorem}
Let $ \mathbb{X} $ be a reflexive Kadets-Klee Banach space and $ \mathbb{Y} $ be any Banach spaces. Let $ T \in \mathbb{K}(\mathbb{X},\mathbb{Y}) $ be of norm one and $ \epsilon > 0 $ be fixed. Then $ T $ is a uniform $ \epsilon- $BPB approximation of itself. In particular, if $ \mathbb{X} $ and $ \mathbb{Y} $ are finite-dimensional Banach spaces, then every $ T \in \mathbb{L}(\mathbb{X},\mathbb{Y}) $ has a uniform $ \epsilon- $BPB approximation, for every $ \epsilon > 0. $ 
\end{theorem}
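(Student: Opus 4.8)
The plan is to strip the statement down to its combinatorial core. Since the approximating operator here is $T$ itself, the requirement $\|A-T\|<\epsilon$ is automatic, and the remaining requirements $\|Tu_0\|=1$ and $\|u_0-x_0\|<\epsilon$ say precisely that every $x_0$ at which $T$ very nearly attains its norm lies within $\epsilon$ of a genuine norm attainment point. Thus the theorem is equivalent to producing, for the given $\epsilon>0$, some $\delta=\delta(\epsilon)>0$ with
\[ M_T(\delta) \subseteq \bigcup_{x \in M_T} \big(B(x,\epsilon) \cap S_{\mathbb{X}}\big). \]
I would first record that $M_T$ is nonempty: as $\mathbb{X}$ is reflexive and $T$ is compact, a norm maximizing sequence on $S_{\mathbb{X}}$ has a weakly convergent subsequence whose image converges in norm, producing a norm attainer. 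This is the same nonemptiness observation used in the proof of Theorem $2.3$.

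I would then argue by contradiction, essentially repeating the mechanism of the ``only if'' part of Theorem $2.3$ but with the general set $M_T$ in place of the doubleton $\{\pm x_0\}$. If no such $\delta$ exists, then for each $n$ there is $y_n \in M_T(1/n)$, i.e. $y_n \in S_{\mathbb{X}}$ with $\|Ty_n\|>1-\tfrac{1}{n}$, such that $\mathrm{dist}(y_n,M_T)\geq\epsilon$. By reflexivity $B_{\mathbb{X}}$ is weakly compact, so after passing to a subsequence $y_n \stackrel{w}{\rightharpoonup} y_0 \in B_{\mathbb{X}}$. Compactness of $T$ gives $Ty_n \to Ty_0$ in norm, whence $\|Ty_0\|=\lim\|Ty_n\|=1=\|T\|$; since $\|Ty_0\|\le\|y_0\|\le 1$ this forces $\|y_0\|=1$, so $y_0\in S_{\mathbb{X}}$ and in fact $y_0\in M_T$.

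At this stage the Kadets-Klee hypothesis does the decisive work: from $y_n \stackrel{w}{\rightharpoonup} y_0$ together with $\|y_n\|=\|y_0\|=1$ we conclude $y_n \to y_0$ in norm, which is impossible because each $y_n$ is at distance at least $\epsilon$ from the point $y_0 \in M_T$. This contradiction yields the desired $\delta(\epsilon)$ and shows $T$ is a uniform $\epsilon$-BPB approximation of itself. I expect the only genuinely delicate point to be exactly this two-step passage: one must use compactness to guarantee that the weak limit actually lands in $M_T$, so that $y_0$ is an admissible target, and only then can the Kadets-Klee property upgrade weak convergence to norm convergence; neither ingredient alone suffices.

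Finally, for the ``in particular'' clause, I would observe that a finite-dimensional space is reflexive and is trivially Kadets-Klee, since on it the weak and norm topologies coincide. Moreover, compactness of $S_{\mathbb{X}}$ makes every bounded operator compact, so $\mathbb{L}(\mathbb{X},\mathbb{Y})=\mathbb{K}(\mathbb{X},\mathbb{Y})$. Applying the first part to each norm one $T$ shows that $T$ is a uniform $\epsilon$-BPB approximation of itself, and hence has a uniform $\epsilon$-BPB approximation; this recovers the reformulated Proposition $2.4$ of \cite{A}.
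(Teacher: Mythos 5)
Your proof is correct and follows essentially the same route as the paper: the paper also establishes $M_T\neq\emptyset$ via reflexivity and compactness, and then invokes the identical weak-compactness/compactness/Kadets-Klee contradiction argument (borrowed from the ``only if'' part of Theorem 2.3, with $M_T$ replacing $\{\pm x_0\}$) to produce the required $\delta(\epsilon)$. Your write-up merely makes explicit the steps the paper cites by reference, and the finite-dimensional specialization is handled the same way.
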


\begin{proof}
Since $ \mathbb{X} $ is reflexive and $ T $ is compact, we have that, $ M_{T} \neq \emptyset. $ Let $ \epsilon > 0 $ be arbitrary. Let us consider the following open set: $ \textit{O} = \bigcup_{x \in M_{T}} B(x,\epsilon).  $ Following the same line of arguments, as given in the proof of the ``only if" part of Theorem $ 2.3, $ we can show that there exists $ \delta_{0} > 0 $ such that for any $ y \in S_{\mathbb{X}} \setminus \textit{O}, $ we have, $ \| Ty \| < \| T \| - \delta_0. $ We would like to note that in this part of the proof, we require that $ \mathbb{X} $ is Kadets-Klee. Let us choose $ \delta(\epsilon) = \delta_0. $ Therefore, if $ x_0 \in S_{\mathbb{X}} $ is such that $ \| Tx_0 \| > \| T \| - \delta(\epsilon), $ then $ x_0 \in \bigcup_{x \in M_{T}} B(x,\epsilon). $ In other words, there exists $ u_0 \in S_{\mathbb{X}}, $ such that $ \| Tu_0 \| = \| T \| $ and $ \| u_0 - x_0 \| < \epsilon. $ Since $ \| T - T \| = 0 < \epsilon, $ it follows that $ T $ is a uniform $ \epsilon- $BPB approximation of itself. This completes the proof of the first part of the theorem. Since every finite-dimensional Banach space is reflexive, the second part of the theorem follows immediately. This establishes the theorem.
\end{proof}

In view of the above theorem, it seems natural to investigate when does an operator have a $ \textit{nontrivial} $ uniform $ \epsilon- $BPB approximation, for every $ \epsilon > 0. $ It turns out that this question has a natural connection with the study of smooth operators between Banach spaces. First, we obtain a necessary condition for smoothness of a compact operator between a reflexive Kadets-Klee Banach space and an arbitrary Banach space.

\begin{theorem}
Let $ \mathbb{X} $ be a reflexive Kadets-Klee Banach space and $ \mathbb{Y} $ be any Banach space. Let $ T \in \mathbb{K}(\mathbb{X},\mathbb{Y}) $ be a norm one smooth operator in $ \mathbb{K}(\mathbb{X},\mathbb{Y}). $ Then $ T $ admits a nontrivial uniform $ \epsilon- $BPB approximation in $ \mathbb{L}(\mathbb{X},\mathbb{Y}) $, for each $ \epsilon > 0. $ 
\end{theorem}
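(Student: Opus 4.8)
The plan is to combine the rigid description of the norm attainment set of a smooth operator with an explicit, arbitrarily small perturbation of $ T $ that still attains its norm at the essentially unique norm attaining point. Since $ \mathbb{X} $ is reflexive and $ T $ is a norm one smooth operator, the ``only if'' part of Theorem $ 2.2 $ (which does not need smoothness of $ \mathbb{Y} $) yields $ M_{T} = \{\pm x_0\} $ for some $ x_0 \in S_{\mathbb{X}} $. I would write $ y_0 = Tx_0 $, so that $ \| y_0 \| = \| T \| = 1 $, and fix by Hahn-Banach a support functional $ f \in S_{\mathbb{X}^*} $ at $ x_0 $, i.e. $ f(x_0) = 1 $. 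For $ \phi \in \mathbb{X}^* $ and $ w \in \mathbb{Y} $, let $ \phi \otimes w $ denote the rank one operator $ x \mapsto \phi(x) w $, so that $ \| \phi \otimes w \| = \| \phi \| \| w \| $. The goal is to produce, for the given $ \epsilon > 0 $, an operator $ A \in S_{\mathbb{L}(\mathbb{X},\mathbb{Y})} $ with $ A \neq T $, $ \| A - T \| < \epsilon $ and $ x_0 \in M_A $; once this is done the uniform estimate follows immediately from Theorem $ 2.6 $.

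For the construction I would set $ A_0 = f \otimes y_0 $, a norm one operator with $ A_0 x_0 = y_0 $, and split into two cases. If $ A_0 \neq T $, I would take $ A = (1-\lambda) T + \lambda A_0 $ with $ 0 < \lambda < \epsilon / 2 $. Then for $ x \in S_{\mathbb{X}} $ one has $ \| Ax \| \leq (1-\lambda)\| Tx \| + \lambda | f(x) | \| y_0 \| \leq 1 $, while $ Ax_0 = y_0 $ forces $ \| Ax_0 \| = 1 $; hence $ \| A \| = 1 $ and $ x_0 \in M_A $. Moreover $ A - T = \lambda(A_0 - T) \neq 0 $ and $ \| A - T \| \leq 2\lambda < \epsilon $. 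If instead $ A_0 = T $, then $ T = f \otimes y_0 $ is rank one, and I would perturb in the range: using $ \dim \mathbb{Y} > 1 $, pick $ w \in \mathbb{Y} $ outside the linear span of $ y_0 $ and set $ y_0' = (y_0 + sw)/\| y_0 + sw \| $ for a small $ s \neq 0 $ with $ \| y_0' - y_0 \| < \epsilon $, so that $ y_0' \in S_{\mathbb{Y}} $ and $ y_0' \neq y_0 $. Taking $ A = f \otimes y_0' $ gives $ \| A \| = 1 $, $ A \neq T $, $ \| A - T \| = \| y_0' - y_0 \| < \epsilon $, and, since the norm attainment set of a rank one operator $ f \otimes w $ depends only on $ f $, also $ M_A = M_T = \{\pm x_0\} $, whence $ x_0 \in M_A $.

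To finish, I would invoke Theorem $ 2.6 $ for the norm one compact operator $ T $: it supplies $ \delta_0 = \delta(\epsilon) > 0 $ for which $ T $ is a uniform $ \epsilon- $BPB approximation of itself, and since $ M_T = \{\pm x_0\} $ this says exactly that $ M_T(\delta_0) \subseteq B(x_0,\epsilon) \cup B(-x_0,\epsilon) $. I would use this same $ \delta_0 $ for $ A $. Indeed, if $ x' \in S_{\mathbb{X}} $ satisfies $ \| Tx' \| > 1 - \delta_0 $, then $ x' $ lies in $ B(x_0,\epsilon) $ or $ B(-x_0,\epsilon) $; choosing $ u_0 = x_0 $ in the first case and $ u_0 = -x_0 $ in the second gives $ u_0 \in M_A $ (so $ \| A u_0 \| = 1 $) and $ \| u_0 - x' \| < \epsilon $. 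Together with $ \| A - T \| < \epsilon $, this exhibits $ A \neq T $ as a uniform $ \epsilon- $BPB approximation of $ T $.

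The one genuinely delicate point is keeping $ A $ distinct from $ T $ while simultaneously preserving $ \| A \| = 1 $ and norm attainment at $ x_0 $. The natural convex combination achieves both at once, but collapses to $ T $ precisely when $ T $ itself is the rank one operator $ f \otimes y_0 $; this degenerate case is exactly where I would fall back on perturbing the range, and it is there that the standing hypothesis $ \dim \mathbb{Y} > 1 $ is indispensable. Everything else is routine, because the separation of $ M_T $ from the rest of $ S_{\mathbb{X}} $ has already been packaged into Theorem $ 2.6 $.
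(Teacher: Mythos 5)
Your proof is correct, and in the generic case it is the paper's own construction in disguise: with $f$ the support functional at $x_0$ and $H=\ker f$, the paper's perturbation $A_n(\alpha x_0+h)=\alpha Tx_0+(1-\tfrac{1}{n})Th$ is exactly your convex combination $(1-\tfrac{1}{n})T+\tfrac{1}{n}(f\otimes Tx_0)$, and both proofs then separate $M_T=\{\pm x_0\}$ from the rest of $S_{\mathbb{X}}$ via the reflexive--compact--Kadets-Klee argument (your citation of ``Theorem 2.6'' for this step should read Theorem 2.5, or the ``only if'' part of Theorem 2.3). The one genuine difference is your explicit treatment of the degenerate case $T=f\otimes Tx_0$, where the convex combination collapses to $T$ itself; the paper simply asserts $A_n\neq T$, which fails precisely there (a smooth rank one $T$ vanishing on $H$ is possible), so your fallback of perturbing the range vector --- using the standing hypothesis $\dim\mathbb{Y}>1$ --- actually repairs a gap in the published argument rather than merely duplicating it.
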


\begin{proof}
Since $ \mathbb{X} $ is reflexive and $ T \in \mathbb{K}(\mathbb{X},\mathbb{Y}) $ is smooth, we have, $ M_{T} = \{ \pm x_0 \}, $ for some $ x_0 \in S_{\mathbb{X}}. $ Let $ \epsilon > 0 $ be fixed. It follows from the Hahn-Banach theorem that there exists a hyperplane $ H $ of codimension $ 1 $ in $ \mathbb{X} $ such that $ x_0 \perp_{B} H, $ i.e., $ x_0 \perp_{B} y $ for all $ y \in H. $ Furthermore, it is clear that every element of $ \mathbb{X} $ can be uniquely written as $ \alpha x_0 + h, $ where $ \alpha \in \mathbb{R} $ and $ h \in H. $ For each natural number $ n, $ we define a map $ A_n:\mathbb{X} \rightarrow \mathbb{Y} $ in the following way: \\
\[ A_n(\alpha x_0 + h) = \alpha Tx_0 + (1 - \frac{1}{n}) Th. \]

It is clear that each $ A_n $ is well-defined and linear. We claim that for each $ n \in \mathbb{N}, $ $ \| A_n \| = 1, $ and therefore, in particular, $ A_n \in \mathbb{L}(\mathbb{X},\mathbb{Y}), $ for each  $ n. $ To prove our claim, we proceed in the following way:\\
Let $ z = \alpha x_0 + h \in S_{\mathbb{X}}. $ We note that, for any nonzero $ \alpha, $ $ 1 = \| z \| = \| \alpha x_0 + h \| = | \alpha | \|x_0 + \frac{1}{ \alpha } h\| \geq | \alpha |. $ Therefore, we have, for any $ z \in S_{\mathbb{X}}, $

\begin{eqnarray*}
        \| A_n z \|&=&\| \alpha Tx_0 + (1 - \frac{1}{n}) Th \|\\
      &=&\|(1-\frac{1}{n})(\alpha Tx_0+Th)+\frac{1}{n}\alpha Tx_0\|\\
 & \leq & (1-\frac{1}{n}) \| Tz \| + \frac{1}{n} | \alpha | \|Tx_0\|\\
 & \leq & (1-\frac{1}{n}) + \frac{1}{n}\\
 & = & 1 
\end{eqnarray*}

Since $ \| A_n x_0 \| = \| Tx_0 \| = \| T \| = 1, $ it follows that $ \| A_n \| = 1, $ for each $ n. $ This completes the proof of our claim. We observe that  for each $ n, $ $ A_n \neq T $ and $ x_0 \in M_{A_{n}}. $ Furthermore, for $ z=\alpha x_0 + h \in S_{\mathbb{X}}, $ we also observe that $ 1 = \| \alpha x_0 + h \| \geq \| h \| - | \alpha | \geq \| h \| - 1. $ In other words, for any $ z=\alpha x_0 + h \in S_{\mathbb{X}}, $ we have, $ \| h \| \leq 2. $ Therefore, $ \| (T - A_n)z \| = \frac{1}{n} \| Th \| \leq \frac{2}{n} \| T \| \rightarrow 0 $ as $ n \rightarrow \infty. $ In particular, this allows us to conclude that $ \| T - A_n \| < \epsilon, $ whenever $ n $ is sufficiently large.\\
Since $ M_T = \{ \pm x_0 \},~\mathbb{X} $ is reflexive, Kadets-Klee and $ T $ is compact, it follows from the arguments given in the proof of the ``only if" part of Theorem $ 2.3 $ that there exists $ \delta = \delta(\epsilon) > 0 $ such that $ M_{T}(\delta) \subseteq (B(x_0,\epsilon) \cup B(-x_0,\epsilon)). $ In other words, if $ y \in S_{\mathbb{X}} $ is such that $ \| Ty \| > \| T \| - \delta, $ then for sufficiently large $ n, $ we have the following:\\

$ (i)~ \| A_n x_0 \| = \| A_n \| = 1,~(ii)~ \| y - x_0 \| < \epsilon,~(iii)~ \| T - A_n \| < \epsilon $ and $ (iv)~ A_n \neq T. $\\ 

Equivalently, we have that, each $ A_n $ is a nontrivial uniform $ \epsilon- $BPB approximation of $ T, $ for sufficiently large $ n.$ Since $ \epsilon > 0 $ was chosen arbitrarily, this establishes the theorem.
\end{proof}

Conversely, in the following proposition, we would like to prove a sufficient condition for smoothness in the space of compact operators, when the domain space is reflexive and the range space is smooth. 

\begin{prop}
Let $ \mathbb{X} $ be a reflexive Banach space and $ \mathbb{Y} $ be a  smooth Banach space. Let $ T \in \mathbb{K}(\mathbb{X},\mathbb{Y}) $ be of norm one. Suppose that for every $ \epsilon > 0, $ $ T $ admits a nontrivial uniform $ \epsilon- $BPB approximation in $ \mathbb{K}(\mathbb{X},\mathbb{Y}) $ which is smooth in $ \mathbb{K}(\mathbb{X},\mathbb{Y}). $ Then $ T $ itself is smooth in $ \mathbb{K}(\mathbb{X},\mathbb{Y}). $
\end{prop}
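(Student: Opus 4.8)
The plan is to reduce smoothness of $T$ to a statement about $M_T$ via Theorem 2.2, and then to derive a contradiction from the assumption that $M_T$ is larger than a single antipodal pair, by using a single well-chosen smooth approximant to confine the norm-attaining points of $T$. Concretely, since $\mathbb{X}$ is reflexive and $\mathbb{Y}$ is smooth, Theorem 2.2 tells us that $T$ is smooth in $\mathbb{K}(\mathbb{X},\mathbb{Y})$ if and only if $M_T = \{\pm x_0\}$ for some $x_0 \in S_{\mathbb{X}}$, so it suffices to establish the latter.

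First I would note that since $\mathbb{X}$ is reflexive and $T$ is compact, $M_T$ is nonempty; fix $x_0 \in M_T$, so that $-x_0 \in M_T$ as well. Arguing by contradiction, suppose there exists $w_0 \in M_T$ with $w_0 \neq \pm x_0$. Mirroring the choice made in the ``if'' part of Theorem 2.3, set $\epsilon_0 < \frac{1}{2}\min\{\|x_0 - w_0\|,\, \|x_0 + w_0\|\}$, which is strictly positive precisely because $w_0 \neq \pm x_0$.

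Next I would invoke the hypothesis at this single value $\epsilon_0$: there is a smooth operator $A \in \mathbb{K}(\mathbb{X},\mathbb{Y})$ that is a uniform $\epsilon_0$-BPB approximation of $T$, with some associated $\delta(\epsilon_0) > 0$. Since $\mathbb{X}$ is reflexive and $\mathbb{Y}$ is smooth, Theorem 2.2 applies to the smooth compact operator $A$ and yields $M_A = \{\pm a\}$ for some $a \in S_{\mathbb{X}}$. Because $x_0, w_0 \in M_T$ satisfy $\|Tx_0\| = \|Tw_0\| = 1 > 1 - \delta(\epsilon_0)$, the defining property of a uniform $\epsilon_0$-BPB approximation produces points $u_0, v_0 \in M_A$ with $\|u_0 - x_0\| < \epsilon_0$ and $\|v_0 - w_0\| < \epsilon_0$; since $M_A = \{\pm a\}$, this forces $u_0, v_0 \in \{\pm a\}$.

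Finally I would run the same sign analysis as in Theorem 2.3: whether $x_0$ and $w_0$ are approximated by the same sign of $a$ or by opposite signs, the triangle inequality yields either $\|x_0 - w_0\| < 2\epsilon_0$ or $\|x_0 + w_0\| < 2\epsilon_0$, each contradicting the choice of $\epsilon_0$. Hence $M_T = \{\pm x_0\}$, and Theorem 2.2 gives that $T$ is smooth in $\mathbb{K}(\mathbb{X},\mathbb{Y})$. I expect the only delicate point to be keeping the sign bookkeeping clean; it is worth noting that neither the nontriviality of the approximant ($A \neq T$) nor the estimate $\|A - T\| < \epsilon_0$ is actually used, since the argument relies only on the fact that a \emph{smooth} approximant, having a two-point norm-attainment set, pins every norm-attaining point of $T$ to within $\epsilon_0$ of $\pm a$.
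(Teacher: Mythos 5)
Your proof is correct and follows essentially the same route as the paper's: assume $M_T$ contains a point $w_0\neq\pm x_0$, pick $\epsilon_0<\frac{1}{2}\min\{\|x_0-w_0\|,\|x_0+w_0\|\}$, and use a single smooth uniform $\epsilon_0$-BPB approximation $A$ (whose norm attainment set is one antipodal pair by Theorem 2.2) to force a triangle-inequality contradiction. Your closing observation that neither $A\neq T$ nor $\|A-T\|<\epsilon_0$ is actually used matches the paper's argument as well.
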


\begin{proof}
Clearly, $ \pm x_0 \in M_{T}, $ for some $ x_0 \in S_{\mathbb{X}}. $ We claim that $ M_{T} = \{ \pm x_0 \}, $ or, equivalently, $ T $ is smooth. If possible, suppose that there exists $ w_0 \in M_T $ such that $ w_{0} \neq \pm x_0. $ Let $ A_{\epsilon} \in \mathbb{L}(\mathbb{X},\mathbb{Y})$ be a uniform $ \epsilon- $BPB approximation of $ T $ such that each $ A_{\epsilon} $ is smooth. Let us choose $ \epsilon_0 > 0 $ such that $ \epsilon_0 < \frac{1}{2}min\{ \| x_0 - w_0 \|,~\| x_0+w_0 \| \}. $ Let $ \delta = \delta(\epsilon_0) > 0 $ be the constant of uniform $ \epsilon- $BPB approximation, corresponding to the value $ \epsilon = \epsilon_{0}. $  Now, $ \| Tx_0 \| = \| Tw_0 \| = 1 > 1 - \delta. $ Since $ A_{\epsilon_{0}} $ is a uniform $ \epsilon_0- $BPB approximation of $ T, $ the following must be true:\\

$ A_{\epsilon_{0}} $ attains norm in each of the open balls $ B(x_0,\epsilon_{0}),~B(-x_0,\epsilon_{0})  $ and $ B(w_0,\epsilon_{0}). $\\

By virtue of our choice of $ \epsilon_0, $ it is easy to see that these three balls are mutually disjoint. Therefore, it follows that $ A_{\epsilon_{0}} $ must attain norm at more than one pair of points. However, this contradicts our assumption that $ A_{\epsilon_{0}} $ is smooth. This contradiction completes the proof of our claim and establishes the theorem. 
\end{proof}

Combining Theorem $ 2.6 $ and Proposition $ 2.7, $ it is possible to completely characterize smooth points in the operator space in the finite-dimensional case, if we further assume that the domain space is strictly convex. We accomplish this goal in the next theorem:

\begin{theorem}
Let $ \mathbb{X} $ be a finite-dimensional strictly convex Banach space and $ \mathbb{Y} $ be a finite-dimensional smooth Banach space. Let $ T \in \mathbb{L}(\mathbb{X},\mathbb{Y}). $ Then $ T $ is smooth if and only if for every $ \epsilon > 0, $ $ T $ admits a nontrivial uniform $ \epsilon- $BPB approximation which is also smooth.
\end{theorem}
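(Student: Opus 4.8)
The plan is to obtain the equivalence by chaining the two results immediately preceding it, after noting that the finite-dimensional hypotheses supply everything those results require: $\mathbb{X}$ is automatically reflexive, weak and norm convergence coincide so $\mathbb{X}$ is trivially Kadets-Klee, and $\mathbb{L}(\mathbb{X},\mathbb{Y}) = \mathbb{K}(\mathbb{X},\mathbb{Y})$, so every operator in sight is compact and smoothness ``in $\mathbb{L}$'' means the same as smoothness ``in $\mathbb{K}$''. I would therefore treat the two implications separately.

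For the converse implication (assuming nontrivial smooth approximations exist for every $\epsilon>0$), I would invoke Proposition $2.7$ essentially verbatim: $\mathbb{X}$ is reflexive, $\mathbb{Y}$ is smooth, $T$ is a norm-one compact operator, and the hypothesis furnishes, for each $\epsilon>0$, a nontrivial uniform $\epsilon$-BPB approximation that is smooth in $\mathbb{K}(\mathbb{X},\mathbb{Y})$; Proposition $2.7$ then yields the smoothness of $T$ with no further work, and strict convexity is not needed here. For the direct implication I would begin with Theorem $2.2$, which gives $M_T=\{\pm x_0\}$ for some $x_0\in S_{\mathbb{X}}$, and then apply Theorem $2.6$ to produce, for each $\epsilon>0$, a nontrivial uniform $\epsilon$-BPB approximation of the form $A_n(\alpha x_0 + h) = \alpha T x_0 + (1-\frac{1}{n})Th$, relative to a hyperplane $H$ with $x_0\perp_B H$. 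The content beyond Theorem $2.6$ is that these $A_n$ can be taken \emph{smooth}, and by Theorem $2.2$ applied to $A_n$ (using reflexivity of $\mathbb{X}$ and smoothness of $\mathbb{Y}$) it suffices to show $M_{A_n}=\{\pm x_0\}$. I would read this off the estimate already present in the proof of Theorem $2.6$: for $z=\alpha x_0 + h\in S_{\mathbb{X}}$ one has $\|A_n z\|\le (1-\frac{1}{n})\|Tz\|+\frac{1}{n}|\alpha|$ with $|\alpha|\le 1$ and $\|Tz\|\le 1$, so $\|A_n z\|=1$ forces $\|Tz\|=1$, which places $z$ in $M_T=\{\pm x_0\}$ and gives $M_{A_n}=\{\pm x_0\}$.

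The step I expect to be the genuine obstacle is producing approximations that are \emph{simultaneously} nontrivial and smooth, and in particular controlling the degenerate low-rank situation, which is exactly where strict convexity of $\mathbb{X}$ should be decisive. When $H\not\subseteq\ker T$ the operators $A_n$ are automatically distinct from $T$, and the computation above shows they are smooth; but when $T$ has rank one and the only admissible hyperplane $H$ coincides with $\ker T$, the recipe of Theorem $2.6$ collapses to $A_n=T$, so a different, rank-increasing perturbation must be built by hand. Here strict convexity of $\mathbb{X}$, together with the compactness of $S_{\mathbb{X}}$ and the smoothness of $\mathbb{Y}$, is what lets one add a small transverse image while keeping the norm-attainment set pinned at $\{\pm x_0\}$: the strict convexity is precisely what prevents the perturbed operator from attaining its norm along a segment and thus guarantees it is smooth, while it remains a genuine $\epsilon$-BPB approximation that still attains its norm at $x_0\in M_T$. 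Verifying that such a perturbation keeps norm one and attains it only at $\pm x_0$ is the most delicate computation, and it is there that the hypotheses on $\mathbb{X}$ and $\mathbb{Y}$ are genuinely consumed.
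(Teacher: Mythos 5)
Your overall route is the paper's: the ``if'' direction is Proposition $2.7$ applied verbatim (finite-dimensionality supplying reflexivity and $\mathbb{K}(\mathbb{X},\mathbb{Y})=\mathbb{L}(\mathbb{X},\mathbb{Y})$), and the ``only if'' direction runs the operators $A_n(\alpha x_0+h)=\alpha Tx_0+(1-\frac{1}{n})Th$ from Theorem $2.6$ and reduces smoothness of $A_n$ to $M_{A_n}=\{\pm x_0\}$ via Theorem $2.2$. Your justification of $M_{A_n}=\{\pm x_0\}$ is slightly different and in fact cleaner: you observe that equality in $\|A_nz\|\le(1-\frac{1}{n})\|Tz\|+\frac{1}{n}|\alpha|$ forces $\|Tz\|=1$, hence $z\in M_T=\{\pm x_0\}$, and this consumes no strict convexity; the paper instead uses strict convexity to show $|\alpha|<1$ off $\pm x_0$ and obtains the strict inequality that way. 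Both computations are correct.

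The degenerate case you flag is a genuine gap, and it is one the paper does not address: the proof of Theorem $2.6$ simply asserts $A_n\neq T$, but $A_n=T$ exactly when $H\subseteq\ker T$, i.e.\ when $T$ has rank one and $\ker T$ is the chosen hyperplane. This actually occurs under the hypotheses of the theorem: take $\mathbb{X}=\mathbb{Y}=\ell_2^2$ and $T$ the orthogonal projection onto the span of $x_0$; then $T$ is smooth, $M_T=\{\pm x_0\}$, and the unique hyperplane $H$ with $x_0\perp_{B}H$ is $\ker T$, so every $A_n$ coincides with $T$ and the construction yields no nontrivial approximation. You correctly identify that a different, rank-increasing perturbation is then required (for instance adding a small operator supported on $H$) and that one must re-verify $\|A\|=1$, $M_A=\{\pm x_0\}$ and the $\epsilon$-BPB estimate, but you do not carry this out. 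So your proposal is incomplete at precisely the point where the paper's own proof is silently incomplete: as written, both arguments establish the ``only if'' implication only for those smooth $T$ with $H\not\subseteq\ker T$, and a complete proof still owes the explicit construction and norm-attainment computation in the rank-one case.
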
 

\begin{proof}
Clearly, the ``if" part is already proven. Let us prove the ``only if" part of the theorem. We note that, in this case, $ \mathbb{K}(\mathbb{X},\mathbb{Y}) = \mathbb{L}(\mathbb{X},\mathbb{Y}), $ and, furthermore, $ \mathbb{X} $ is reflexive and Kadets-Klee. We follow the notations from the proof of Theorem $ 2.6. $ It is obvious that the proof will be completed, if we can show that each $ A_n $ is smooth, or, equivalently, each $ A_n $ attains norm at only one pair of points. We claim that $ M_{A_{n}} = \{ \pm x_0 \} $ for each $ n. $    Let $ z=\alpha x_0 + h \in S_{\mathbb{X}}. $ This is where we would like to apply the strict convexity of $ \mathbb{X}. $ Suppose $ \alpha \neq 0. $ We have, $ 1=\|z\|=|\alpha| \|x_0+\frac{1}{\alpha}h\| \geq |  \alpha |, $ since $ x_0 \perp_{B} h. $ Let us note that we must have, $ x_0,~h $ are linearly independent, provided $ h \neq 0. $ As $ \mathbb{X} $ is strictly convex, we have, $ \|x_0+\frac{1}{\alpha}h\| > \| x_0 \| = 1, $ whenever $ h \neq 0. $ In other words, whenever $ h \neq 0, $ we have that,  $ | \alpha | < 1. $ On the other hand, clearly, $ h=0 $ implies that $ \alpha = \pm 1, $ i.e., $ z=\pm x_0. $ Therefore, in effect, we have proved the following:\\ 

If $z=\alpha x_0 + h \in S_{\mathbb{X}}$ then $ | \alpha | \leq 1. $ Moreover, $ | \alpha |=1 $ if and only if $ z=\pm x_0. $\\

\noindent Now, for any $ z \in S_{\mathbb{X}} \setminus \{ \pm x_0 \}, $ we have,

\begin{eqnarray*}
        \| A_n z \|&=&\| \alpha Tx_0 + (1 - \frac{1}{n}) Th \|\\
      &=&\|(1-\frac{1}{n})(\alpha Tx_0+Th)+\frac{1}{n}\alpha Tx_0\|\\
 & \leq & (1-\frac{1}{n}) \| Tz \| + \frac{1}{n} | \alpha | \|Tx_0\|\\
 & < & (1-\frac{1}{n}) + \frac{1}{n}\\
 & = & 1 
\end{eqnarray*}

This proves that $ M_{A_{n}} = \{ \pm x_0 \}, $ as claimed by us. This completes the proof of the theorem.
\end{proof}

In light of Theorem $ 2.8, $ the following complementary question arises naturally:\\

\textit{Let $ \mathbb{X}, \mathbb{Y} $ be finite-dimensional strictly convex and smooth Banach spaces. Obtain a necessary and sufficient condition for a norm one element $ T $ of $ \mathbb{L}(\mathbb{X},\mathbb{Y}) $  to be such that $ T $ admits no nontrivial uniform $ \epsilon- $BPB approximation for sufficiently small $ \epsilon > 0. $}\\

While we are unable to answer this general question, we prove that the class of all norm one linear operators satisfying the above mentioned property is nonempty in $ \mathbb{L}(\ell_{p}^{2},\ell_{p}^{2}), $ where $ 2 < p \in \mathbb{N}. $ Indeed, in the following theorem, we prove that any isometry in $ \mathbb{L}(\ell_{p}^{2},\ell_{p}^{2}) $ belongs to the desired category, when $ 2 < p \in \mathbb{N}. $

\begin{theorem}
Let $ \mathbb{X}=\ell_{p}^{2}, $ where $ 2 < p \in \mathbb{N}. $ Let $ T \in \mathbb{L}(\mathbb{X},\mathbb{X}) $ be an isometry. Then there exists $ \epsilon_0 > 0 $ such that for any $ 0 < \epsilon < \epsilon_0, $ $ T $ is the only uniform $ \epsilon- $BPB approximation of $ T. $ 
\end{theorem}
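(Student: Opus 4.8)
The plan is to unwind what it means to be a uniform $\epsilon$-BPB approximation of an \emph{isometry}, then to show that this condition is rigid enough to force the approximant to itself be an isometry, and finally to invoke the discreteness of the isometry group of $\ell_p^2$. The starting observation is that since $T$ is an isometry, $\|Tx\| = 1$ for every $x \in S_{\mathbb{X}}$, so the hypothesis $\|Tx_0\| > 1 - \delta(\epsilon)$ in the definition holds for \emph{every} $x_0 \in S_{\mathbb{X}}$, irrespective of $\delta$. Hence $A \in S_{\mathbb{L}(\mathbb{X},\mathbb{X})}$ is a uniform $\epsilon$-BPB approximation of $T$ if and only if $\|A-T\| < \epsilon$ and $M_A$ is $\epsilon$-dense in $S_{\mathbb{X}}$, i.e. every point of $S_{\mathbb{X}}$ lies within $\epsilon$ of some point of $M_A$ (using that $\|A\|=1$ makes $\|Au_0\|=1$ equivalent to $u_0 \in M_A$). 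Thus the whole theorem reduces to producing an $\epsilon_0$ small enough that these two conditions are satisfiable only by $A = T$.

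The core is a \emph{uniform} finiteness lemma: there is a constant $N = N(p)$ such that any norm one $A \in \mathbb{L}(\mathbb{X},\mathbb{X})$ which is not an isometry satisfies $|M_A| \le N$. To see this I would note that, since $\|A\|=1$, a point $x \in S_{\mathbb{X}}$ lies in $M_A$ exactly when $\|Ax\|^p = \|x\|^p$. Writing $\Phi(x) = \|Ax\|^p - \|x\|^p = |(Ax)_1|^p + |(Ax)_2|^p - |x_1|^p - |x_2|^p$, I would cut $S_{\mathbb{X}}$ into the finitely many (at most eight) open arcs determined by the four lines $x_1 = 0$, $x_2 = 0$, $(Ax)_1 = 0$, $(Ax)_2 = 0$. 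On each such arc the signs of all four coordinates are constant, so $\Phi$ agrees with a fixed homogeneous degree-$p$ polynomial $\Phi_V(x) = t_1(Ax)_1^p + t_2(Ax)_2^p - s_1 x_1^p - s_2 x_2^p$ (with $t_j, s_i \in \{\pm 1\}$). A homogeneous binary form of degree $p$ either vanishes identically or factors into at most $p$ linear forms over $\mathbb{C}$, hence has at most $p$ real lines through the origin in its zero set; as $S_{\mathbb{X}}$ is strictly convex each such line meets it in at most two points, giving at most $2p$ points of $M_A$ per arc, so $|M_A| \le 16p$ \emph{unless} some $\Phi_V \equiv 0$.

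The delicate case, which I expect to be the main obstacle, is exactly when $\Phi_V \equiv 0$ on some sector: then the polynomial identity $t_1 (Ax)_1^p + t_2(Ax)_2^p = s_1 x_1^p + s_2 x_2^p$ holds on an open set, hence everywhere on $\mathbb{R}^2$. Here I would invoke the (essential) uniqueness of decompositions of a binary form as a sum of $p$-th powers: for $p \ge 3$ the $p$-th powers of any four pairwise non-proportional linear forms are linearly independent in the $(p+1)$-dimensional space of degree-$p$ forms, so the identity forces each of $(Ax)_1, (Ax)_2$ to be proportional to one of $x_1, x_2$. Thus $A$ carries the coordinate axes to the coordinate axes, and $\|A\|=1$ then pins the scaling factors to modulus one, so $A$ is a signed permutation, i.e. an isometry — contradicting the standing assumption. (For even $p$ this step is unnecessary: $\Phi$ is a single global polynomial and $\Phi \equiv 0$ immediately says $A$ is an isometry.) I would take care over the sign bookkeeping and over the degenerate subcases where the four lines coincide, which only shrink the number of arcs.

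It remains to combine the lemma with a metric density estimate and the discreteness of the isometries. A finite $\epsilon$-dense subset of $S_{\mathbb{X}}$ must have at least $L/(c\epsilon)$ points, where $L$ is the length of $S_{\mathbb{X}}$ and $c$ is a constant comparing $\ell_p$-balls of radius $\epsilon$ with the arcs they cut out; in particular its cardinality tends to infinity as $\epsilon \to 0$. The isometries of $\ell_p^2$ for $p \neq 2$ are precisely the eight signed permutation matrices, a finite hence discrete set, so there is a minimal distance $d_0 > 0$ between distinct isometries. I would now choose $\epsilon_0 > 0$ small enough that both $L/(c\epsilon_0) > N$ and $\epsilon_0 \le d_0$. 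Then for any $0 < \epsilon < \epsilon_0$, a uniform $\epsilon$-BPB approximation $A$ of $T$ has $M_A$ $\epsilon$-dense, forcing $|M_A| > N$; by the lemma $A$ must be an isometry; and $\|A - T\| < \epsilon < d_0$ then forces $A = T$. This yields the claim, with the algebraic rigidity of the degenerate sector being the only genuinely subtle point.
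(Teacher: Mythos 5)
Your proposal is correct and follows the same overall strategy as the paper: since $T$ is an isometry the BPB hypothesis is vacuous and a uniform $\epsilon$-BPB approximation $A$ of $T$ is exactly a norm one operator with $\|A-T\|<\epsilon$ whose norm attainment set $M_A$ is $\epsilon$-dense in $S_{\mathbb{X}}$; other isometries are excluded by the discreteness of the finite isometry group, and non-isometries are excluded because $M_A$ is then uniformly finite and hence not $\epsilon$-dense for small $\epsilon$. The one substantive difference is the source of the finiteness bound: the paper simply cites Theorem $2.8$ of \cite{S}, which gives $|M_A|\le 2(8p-5)$ for any norm one operator that is not a scalar multiple of an isometry, whereas you derive a bound of order $16p$ from scratch by cutting $S_{\mathbb{X}}$ into sign sectors, noting that $\|Ax\|^p-\|x\|^p$ agrees with a binary form of degree $p$ on each sector, and disposing of the identically vanishing case via the linear independence of $p$-th powers of pairwise non-proportional linear forms. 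That argument is sound (up to the at most $8$ boundary points of the sectors, which only adjust the constant) and makes the proof self-contained, at the cost of some case analysis the paper outsources. A second, smaller difference works in your favour: to see that a set of at most $N$ points cannot be $\epsilon$-dense, the paper asserts that the diameter of $\bigcup_i B(\pm x_i,\epsilon)$ is at most $4k\epsilon<2$, which is not literally correct since that union contains antipodal pairs at distance $2$; your arc-length packing estimate (an $\epsilon$-dense subset of $S_{\mathbb{X}}$ must have at least $L/(c\epsilon)$ points) reaches the intended conclusion cleanly and is the right way to finish that step.
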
 

\begin{proof}
We begin our proof with the remark that it is well-known that there are only finitely many isometries in $ \mathbb{L}(\mathbb{X},\mathbb{X}). $\\
Let $ \epsilon_1 =  min~\mbox{\{ $ \| V-S \| $ : $ V,~S $ are distinct isometries in $ \mathbb{L}(\mathbb{X},\mathbb{X}) $ \} } > 0. $\\
We would further like to remark that it follows from Theorem $ 2.8 $ of \cite{S} that any linear operator in $ \mathbb{L}(\mathbb{X},\mathbb{X}), $ which is not a scalar multiple of an isometry, attains norm at not more than $ 2(8p-5) $ number of points of $ S_{\mathbb{X}}. $\\
\\
Let us choose $ 0 < \epsilon_{0} < min~\{ \epsilon_1,\frac{1}{2(8p-5)} \}. $\\

We claim that for any $ 0 < \epsilon \leq \epsilon_0, $ $ T $ is the only uniform $ \epsilon- $BPB approximation of $ T. $ If $ S (\neq T) $ is any isometry in $ \mathbb{L}(\mathbb{X},\mathbb{X}), $ then $ S $ cannot be a uniform $ \epsilon- $BPB approximation of $ T, $ since $ \| S - T \| > \epsilon_0 \geq \epsilon . $ Now, let $ A \in \mathbb{L}(\mathbb{X},\mathbb{X}) $ be a norm one operator which is not an isometry. Then $ |M_{A}| \leq 2(8p-5), $ where $ | M_{A} | $ denotes the cardinality of $ A. $ Let $ A $ attains norm only at the points $ \pm x_1,\pm x_2,\ldots,\pm x_k \in S_{\mathbb{X}}, $ where $ k \leq (8p-5). $ Let $ \textit{O}=\bigcup_{i=1}^{k} B(\pm x_i,\epsilon). $ We observe that the diameter of $ \textit{O}=sup~ \{ \|x-y\| : x, y \in \textit{O} \} \leq 4k\epsilon \leq 4(8p-5)\epsilon_0 < 2. $ On the other hand, the diameter of $ S_{\mathbb{X}} = sup~\{ \|x-y\| : x,y \in S_{\mathbb{X}} \} = 2. $ This proves that $ \textit{O}\cap S_{\mathbb{X}} \subsetneq S_{\mathbb{X}}. $ In other words, $ S_{\mathbb{X}} \setminus \textit{O} $ is nonempty. Let us choose $ z_0 \in S_{\mathbb{X}} \setminus \textit{O}. $ Since $ T $ is an isometry, $ z_0 \in M_{T}. $ However, our choice of $ z_0 \in S_{\mathbb{X}} \setminus \textit{O} $  ensures that $ A $ does not attain norm in an $ \epsilon $ neighbourhood of the point $ z_0. $ In other words, $ A $ cannot be a uniform $ \epsilon- $BPB approximation of $ T. $ We note that $ A $ was chosen arbitrarily, with the only restrictions that $ \| A \|=1 $ and $ A $ is not an isometry. Therefore, we have effectively proved that when $ \epsilon \leq \epsilon_0, $ $ T $ cannot have a nontrivial uniform $ \epsilon- $BPB approximation. On the other hand, Theorem $ 2.5 $ ensures that $ T $ is a uniform $ \epsilon- $BPB approximation of $ T, $ for every $ \epsilon > 0. $ Combining all these facts together, we may and do conclude that our claim is true. This establishes the theorem.
\end{proof}

Our next objective is to show that in certain cases, counterexamples to uniform sBPBp can already be found by considering only the class of smooth operators of norm one.

\begin{theorem}
Let $ \mathbb{X} $ be a finite-dimensional strictly convex and smooth Banach space. Let $ \mathcal{F} $ be the class of all norm one smooth operators in $ \mathbb{L}(\mathbb{X},\mathbb{X}). $ Then the pair $ (\mathbb{X},\mathbb{X}) $ does not have the uniform sBPBp with respect to $ \mathcal{F}. $
\end{theorem}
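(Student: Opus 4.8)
The plan is to exploit the fact that the family $\mathcal{F}$ of norm one smooth operators is not closed in $\mathbb{L}(\mathbb{X},\mathbb{X})$. By Theorem $2.3$, every \emph{individual} smooth operator already satisfies the (non-uniform) sBPBp, so any failure of uniformity must be witnessed by a \emph{sequence} of smooth operators whose norm attainment sets collapse to a single antipodal pair while the operators themselves converge to a non-smooth limit attaining its norm at several pairs. Accordingly, I would first fix the data producing a positive threshold: choose $x_0 \in S_{\mathbb{X}}$ and, since $\dim \mathbb{X} > 1$, a point $y_0 \in S_{\mathbb{X}}$ with $y_0 \neq \pm x_0$, and set $\epsilon_0 = \tfrac{1}{2}\min\{ \|x_0 - y_0\|,\ \|x_0 + y_0\| \} > 0$.

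Next I would build the degenerating sequence. Using the Hahn-Banach theorem exactly as in the proof of Theorem $2.6$, pick a hyperplane $H$ of codimension one with $x_0 \perp_B H$, so that every vector of $\mathbb{X}$ is uniquely $\alpha x_0 + h$ with $\alpha \in \mathbb{R}$ and $h \in H$, and define $T_n : \mathbb{X} \to \mathbb{X}$ by $T_n(\alpha x_0 + h) = \alpha x_0 + (1 - \tfrac{1}{n})\,h$. This is precisely the construction of Theorem $2.8$ applied to the identity operator. Writing $T_n z = (1 - \tfrac1n) z + \tfrac1n \alpha x_0$ for $z = \alpha x_0 + h \in S_{\mathbb{X}}$ and invoking strict convexity exactly as in the norm estimate of Theorem $2.8$ shows that $\|T_n z\| \leq (1 - \tfrac1n) + \tfrac1n |\alpha| \leq 1$, with strict inequality unless $z = \pm x_0$. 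Hence $\|T_n\| = 1$ and $M_{T_n} = \{ \pm x_0 \}$. Since $\mathbb{X}$ is reflexive, $\mathbb{Y} = \mathbb{X}$ is smooth, and $M_{T_n}$ is a single antipodal pair, Theorem $2.2$ guarantees that each $T_n$ is smooth, i.e.\ $T_n \in \mathcal{F}$.

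Finally I would read off the failure of uniformity at the fixed point $y_0$. Writing $y_0 = \beta x_0 + h_0$ with $h_0 \neq 0$ (as $y_0 \neq \pm x_0$), one has $T_n y_0 = \beta x_0 + (1 - \tfrac1n) h_0 \to y_0$, so $\|T_n y_0\| \to \|y_0\| = 1 = \|T_n\|$ by continuity of the norm. Thus given any candidate modulus $\eta > 0$, choosing $n$ large enough that $\|T_n y_0\| > 1 - \eta$ produces an operator $T_n \in \mathcal{F}$ and a point $y_0 \in S_{\mathbb{X}}$ satisfying the hypothesis $\|T_n y_0\| > 1 - \eta$, while the only norm attaining points of $T_n$ are $\pm x_0$, both at distance at least $2\epsilon_0 > \epsilon_0$ from $y_0$. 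Hence no $x_1 \in M_{T_n}$ lies within $\epsilon_0$ of $y_0$, and uniform sBPBp with respect to $\mathcal{F}$ fails for this $\epsilon_0$.

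I expect the main obstacle to be conceptual rather than computational: one must recognize that, because Theorem $2.3$ forces sBPBp for each fixed smooth operator, the counterexample cannot be a single operator but must instead be a sequence of smooth operators whose approximate norm attainment sets refuse to shrink \emph{uniformly}, realized here by pushing smooth operators toward the (non-smooth) identity. The only technical point demanding care is the verification that $M_{T_n} = \{ \pm x_0 \}$, which hinges on the strict convexity of $\mathbb{X}$ and reuses verbatim the estimate already carried out in Theorem $2.8$.
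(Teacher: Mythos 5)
Your proposal is correct and follows essentially the same route as the paper: the same hyperplane decomposition $\alpha x_0 + h$ with $x_0 \perp_B H$, the same operators $T_n(\alpha x_0 + h) = \alpha x_0 + (1-\tfrac{1}{n})h$ with $M_{T_n} = \{\pm x_0\}$ verified via strict convexity as in Theorem $2.8$, and the same contradiction obtained by letting $\|T_n y_0\| \to 1$ at a point $y_0$ bounded away from $\pm x_0$. The only cosmetic difference is that the paper takes $y_0 \in H \cap S_{\mathbb{X}}$ and uses $\|x_0 \pm y_0\| \geq 1$ with $\epsilon = \tfrac{1}{2}$, whereas you allow an arbitrary $y_0 \neq \pm x_0$ and tailor $\epsilon_0$ to it; both are valid.
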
 

\begin{proof}
Let us fix any $ x_0 \in S_{\mathbb{X}}. $ Let $ H_0 $ be the hyperplane of codimension $ 1 $ such that $ x_0 \perp_{B} H_0, $ i.e., $ x_0 \perp_{B} y_0 $ for all $ y_{0} \in H_0. $ For each natural number $ n, $ we define a map $ A_n : \mathbb{X} \rightarrow \mathbb{X} $ in the following way:\\

$ A_n (\alpha x_0 + h_0) = \alpha x_0 + (1-\frac{1}{n}) h_0, $ for $ \alpha \in \mathbb{R} $ and $ h_0 \in H_0. $\\

Since $ \mathbb{X} $ is strictly convex, following arguments similar to those given in the proof of Theorem $ 2.8, $ it is easy to verify that for each $ n \in \mathbb{N}, $ $ \| A_n \| =1 $ and $ M_{A_{n}} = \{ \pm x_0 \}.$ Since $ \mathbb{X} $ is finite-dimensional and smooth, this proves that each $ A_n $ is a norm one smooth point in $ \mathbb{L}(\mathbb{X},\mathbb{X}), $ i.e., $ A_n \in \mathcal{F}. $ On the other hand, if $ y_0 \in H_0 \cap S_{\mathbb{X}} $ is any any vector then it is easy to see that $ \| A_n y_0 \| = \| (1-\frac{1}{n}) y_0 \| = 1-\frac{1}{n} \rightarrow 1 $ as $ n \rightarrow \infty. $ \\
We note that $ \|x_0 - y_0\|,~\|x_0 + y_0\| \geq 1, $ since $ x_0 \perp_{B} y_0. $ If possible, suppose that $ (\mathbb{X},\mathbb{X}) $ has uniform sBPBp with respect to $ \mathcal{F}. $ Let us choose $ \epsilon = \frac{1}{2}. $ Then there exists $ \eta=\eta(\epsilon)>0 $ such that whenever $ T \in \mathcal{F} $ and $ z_0 \in S_{\mathbb{X}} $ are such that $ \|Tz_0\|>1-\eta, $ there exists $ z_1 \in S_{\mathbb{X}} $ such that $ \|Tz_1\|=1 $ and $ \| z_1 - z_0 \| < \epsilon.  $  Since $ \|A_n y_0\| \rightarrow 1, $ there exists $ n_0 \in \mathbb{N} $ such that $ \| A_{n_{0}} y_0 \| > 1-\eta. $ However, $ A_{n_{0}} $ does not attain norm in an $ \epsilon $ neighbourhood of $ y_0, $ since $ M_{A_{n_{0}}} = \{ \pm x_0 \} $ and $ \| x_0 - y_0 \|,~\|x_0 + y_0\| \geq 1 > \epsilon. $ Since $ A_{n_{0}} \in \mathcal{F}, $ this contradicts our hypothesis that $ (\mathbb{X},\mathbb{X}) $ has uniform sBPBp with respect to $ \mathcal{F}. $ This contradiction completes the proof of the theorem.  
\end{proof}

In the next theorem, we would like to obtain a complete characterization of uniform sBPBp for a pair of Banach spaces, with respect to a given family of norm one bounded linear operators. To serve this purpose, it is convenient to introduce a new notation.  

\begin{theorem}
Let $ \mathbb{X}~\mathbb{Y} $ be Banach spaces. Let $ \mathcal{F} $ be a family of norm one bounded linear operators in $ \mathbb{L}(\mathbb{X},\mathbb{Y}). $ Then the pair $ (\mathbb{X},\mathbb{Y}) $ has uniform sBPBp with respect to $ \mathcal{F} $ if and only if for every $ \epsilon > 0, $ we have,  $ sup~\{ \| Tz \| : T \in \mathcal{F}, z \in D(T,\epsilon) \} < 1, $ where $ D(T,\epsilon) = S_{\mathbb{X}} \setminus (\bigcup_{x \in M_{T}} B(x,\epsilon)).$ 
\end{theorem}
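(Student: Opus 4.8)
The plan is to recognize that this equivalence is, like Theorem $2.4$, essentially a reformulation of the uniform sBPBp condition at the level of quantifiers, the single nontrivial point being the translation of the desired conclusion into a statement about $D(T,\epsilon)$. The crucial observation is that since every $T \in \mathcal{F}$ has $\|T\|=1$, the requirement ``$x_1 \in S_{\mathbb{X}}$ with $\|Tx_1\|=1$ and $\|x_1-x_0\|<\epsilon$'' is exactly the statement that $x_1 \in M_T$ and $x_0 \in B(x_1,\epsilon)$; equivalently, that $x_0 \in \bigcup_{x \in M_T} B(x,\epsilon)$, i.e. $x_0 \notin D(T,\epsilon)$. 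Once this is in place, uniform sBPBp with respect to $\mathcal{F}$ reads precisely as follows: for every $\epsilon>0$ there is $\eta(\epsilon)>0$ such that for all $T \in \mathcal{F}$ and all $x_0 \in S_{\mathbb{X}}$, the inequality $\|Tx_0\|>1-\eta(\epsilon)$ forces $x_0 \notin D(T,\epsilon)$. Taking the contrapositive in $x_0$, this is the same as $z \in D(T,\epsilon) \Rightarrow \|Tz\| \leq 1-\eta(\epsilon)$.

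For the ``only if'' direction I would fix $\epsilon>0$, take the $\eta(\epsilon)$ granted by uniform sBPBp, and argue by contradiction: if some $T \in \mathcal{F}$ and some $z \in D(T,\epsilon)$ satisfied $\|Tz\|>1-\eta(\epsilon)$, then uniform sBPBp would produce $x_1 \in M_T$ with $\|x_1-z\|<\epsilon$, placing $z$ in $\bigcup_{x \in M_T} B(x,\epsilon)$ and contradicting $z \in D(T,\epsilon)$. Hence $\|Tz\| \leq 1-\eta(\epsilon)$ for every admissible pair, so the supremum is at most $1-\eta(\epsilon)<1$.

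For the ``if'' direction I would fix $\epsilon>0$, set $s(\epsilon) := \sup\{\|Tz\| : T \in \mathcal{F},~z \in D(T,\epsilon)\}$ and define $\eta(\epsilon) := 1-s(\epsilon)$, which is positive by hypothesis. Given $T \in \mathcal{F}$ and $x_0 \in S_{\mathbb{X}}$ with $\|Tx_0\|>1-\eta(\epsilon)=s(\epsilon)$, the definition of the supremum forces $x_0 \notin D(T,\epsilon)$ (otherwise $\|Tx_0\| \leq s(\epsilon)$, a contradiction). Therefore $x_0 \in \bigcup_{x \in M_T} B(x,\epsilon)$, so choosing $x_1 \in M_T$ with $\|x_1-x_0\|<\epsilon$ yields $\|Tx_1\|=\|T\|=1$ and $\|x_1-x_0\|<\epsilon$, which is exactly the conclusion of uniform sBPBp.

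I do not anticipate a genuine obstacle here; the content is a clean quantifier reformulation, and the only points requiring care are bookkeeping ones: using $\|T\|=1$ to identify $\{\,\|Tx_1\|=1\,\}$ with $x_1 \in M_T$; handling the vacuous case in which $D(T,\epsilon)=\emptyset$, where the implication holds trivially and the supremum, being taken over an empty set, is $<1$ by convention; and noting that in the ``if'' direction $M_T$ is automatically nonempty, since the set $\bigcup_{x \in M_T} B(x,\epsilon)$ is shown to contain $x_0$.
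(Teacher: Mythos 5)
Your proof is correct and follows essentially the same route as the paper: both directions reduce to the quantifier reformulation ``$\|Tx_0\|>1-\eta$ implies $x_0\notin D(T,\epsilon)$,'' with $\eta(\epsilon)=1-\sup\{\|Tz\|: T\in\mathcal{F},\ z\in D(T,\epsilon)\}$ in the ``if'' direction and the contrapositive bound $\sup\leq 1-\eta(\epsilon)$ in the ``only if'' direction. Your added remarks on the vacuous case $D(T,\epsilon)=\emptyset$ and on the nonemptiness of $M_T$ are sensible bookkeeping that the paper leaves implicit.
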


\begin{proof}
Let us first prove the ``if" part. Let $ \epsilon > 0 $ be fixed. Let $ sup~\{ \| Tz \| : T \in \mathcal{F}, z \in D(T,\epsilon) \} = 1 - \delta.$ Let us choose $ \eta = \eta(\epsilon) = \delta. $ Let us suppose that $ T \in \mathcal{F} $ and $ x_0 \in S_{\mathbb{X}} $ are such that $ \| Tx_0 \| > 1 - \eta. $ It is clear from the choice of $ \eta $ that we must have, $ x_0 \notin D(T,\epsilon) = S_{\mathbb{X}} \setminus \bigcup_{x \in M_{T}} B(x,\epsilon). $ Since $ x_0 \in S_{\mathbb{X}}, $ it follows that $ x_{0} \in \bigcup_{x \in M_{T}} B(x,\epsilon). $ In other words, there exists $ x_1 \in M_{T} $ such that $ \| x_1 - x_0 \| < \epsilon. $ Since $ \epsilon > 0 $ is chosen arbitrarily, this proves that the pair $ (\mathbb{X},\mathbb{Y}) $ has uniform sBPBp and thereby establishes the theorem in one direction.\\
The ``only if" part of the theorem can be proved by applying similar reasoning. Suppose the pair $ (\mathbb{X},\mathbb{Y}) $ has uniform sBPBp with respect to $ \mathcal{F}. $ Given any $ \epsilon > 0, $ let $ \eta=\eta(\epsilon) $ be the constant associated with uniform sBPBp with respect to $ \mathcal{F}, $ corresponding to $ \epsilon. $ Then it is easy to see that  $ sup~\{ \| Tz \| : T \in \mathcal{F}, z \in D(T,\epsilon) \} \leq 1 - \eta < 1. $ This completes the proof of theorem in the reverse direction. 
\end{proof}

As the final result of this paper, we obtain a sufficient condition for a pair of Banach spaces to satisfy sBPBp for compact operators. It will be clear from the remark immediately after the theorem that our result is a proper refinement of an earlier analogous result by Dantas in \cite{Da}.

\begin{theorem}
Let $ \mathbb{X} $ be a reflexive Kadets-Klee Banach space and $ \mathbb{Y} $ be any Banach space. Then the pair $ (\mathbb{X},\mathbb{Y}) $ has sBPBp for compact operators.
\end{theorem}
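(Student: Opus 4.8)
The plan is to deduce this immediately from Theorem $2.5$, which already carries the essential content. Fix a norm one compact operator $T \in \mathbb{K}(\mathbb{X},\mathbb{Y})$ and let $\epsilon > 0$ be given. By Theorem $2.5$, $T$ is a uniform $\epsilon$-BPB approximation of itself; that is, taking $A = T$ in the definition of uniform $\epsilon$-BPB approximation, there exists $\delta(\epsilon) > 0$ such that whenever $x_0 \in S_{\mathbb{X}}$ satisfies $\| Tx_0 \| > 1 - \delta(\epsilon)$, one can find $u_0 \in S_{\mathbb{X}}$ with $\| Tu_0 \| = 1$ and $\| u_0 - x_0 \| < \epsilon$. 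The remaining requirement $\| A - T \| < \epsilon$ in that definition is automatic here, since $\| T - T \| = 0 < \epsilon$.

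The next step is simply to match this against the definition of sBPBp. Setting $\eta(\epsilon,T) = \delta(\epsilon)$, the conclusion just recalled is precisely the assertion that whenever $\| Tx_0 \| > 1 - \eta(\epsilon,T)$ there exists $x_1 := u_0 \in S_{\mathbb{X}}$ with $\| Tx_1 \| = 1$ and $\| x_1 - x_0 \| < \epsilon$. Since $T$ was an arbitrary norm one compact operator and $\epsilon > 0$ was arbitrary, the pair $(\mathbb{X},\mathbb{Y})$ has sBPBp for compact operators, which is exactly what we want.

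There is essentially no obstacle at this level, because all the analytic difficulty has already been absorbed into Theorem $2.5$ and, through it, into the ``only if'' argument of Theorem $2.3$. The crux of that argument is the claim that, with $M_T \neq \emptyset$ and any $\epsilon > 0$ fixed, the supremum of $\| Tz \|$ over those $z \in S_{\mathbb{X}}$ lying outside an $\epsilon$-neighbourhood of $M_T$ is strictly less than $\| T \| = 1$. This is where reflexivity is used to extract a weakly convergent subsequence from $B_{\mathbb{X}}$, compactness of $T$ is used to pass from $y_n \rightharpoonup y_0$ to $\| Ty_n \| \to \| Ty_0 \|$, forcing $\| Ty_0 \| = \| T \|$ and hence $y_0 \in M_T$, and the Kadets-Klee property is used to upgrade $y_n \rightharpoonup y_0$ together with $\| y_n \| = \| y_0 \| = 1$ to norm convergence, contradicting that the $y_n$ remain at distance at least $\epsilon$ from $M_T$. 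Consequently the only thing to verify in the present theorem is the purely formal identification $\eta(\epsilon,T) = \delta(\epsilon)$. I would close with the promised remark that, since every LUR space is Kadets-Klee but not conversely, this statement properly refines the corresponding result of Dantas in \cite{Da}.
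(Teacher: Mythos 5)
Your proof is correct, but it takes a different (and more economical) route than the paper. You observe that the statement ``$T$ is a uniform $\epsilon$-BPB approximation of itself'' is, after discarding the vacuous condition $\|T-T\|<\epsilon$, literally the sBPBp condition for the single operator $T$ with $\eta(\epsilon,T)=\delta(\epsilon)$; hence the theorem is an immediate formal corollary of Theorem $2.5$. The paper does not make this identification explicit. Instead it gives a self-contained proof by contradiction: assuming sBPBp fails for some $\epsilon_0$ and some norm one compact $T_0$, it extracts a sequence $x_n\in S_{\mathbb{X}}$ with $\|T_0x_n\|>1-\frac{1}{n}$ and $\|x_n-y\|\geq\epsilon_0$ for all $y\in M_{T_0}$, passes to a weak limit $z_0$ by reflexivity, uses compactness of $T_0$ to get $\|T_0z_0\|=1$ and hence $z_0\in M_{T_0}\subseteq S_{\mathbb{X}}$, and then invokes Kadets-Klee to upgrade to norm convergence $x_n\to z_0$, a contradiction. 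This is exactly the analytic argument you correctly identify as the crux already buried in the proofs of Theorem $2.3$ (``only if'' part) and Theorem $2.5$, so the two proofs have identical mathematical content; yours buys brevity and makes the logical dependence on Theorem $2.5$ transparent, while the paper's version is readable independently of the earlier uniform $\epsilon$-BPB machinery. Your closing remark about LUR versus Kadets-Klee matches Remark $2.2$ of the paper.
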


\begin{proof}
We prove the result by the method of contradiction. If possible, suppose that the pair $ (\mathbb{X},\mathbb{Y}) $ does not have sBPBp for compact operators. Then there exists an $ \epsilon_0 > 0 $ and a norm one compact operator $  T_0 \in \mathbb{K}(\mathbb{X},\mathbb{Y}) $ such that no $ \eta = \eta(\epsilon_0,T_0) $ ``works", i.e., given any $ \eta > 0, $ the following does not hold:\\

$ \| T_0 x_0 \| > 1 - \eta $ for some $ x_0 \in S_{\mathbb{X}} $ implies that there exists $ x_1 \in S_{\mathbb{X}} $ such that $ \| T_0 x_1 \| = 1 $ and $ \| x_1 - x_0 \| < \epsilon. $ \\

In particular, for any natural number $ n, $ there exists $ x_n \in S_{\mathbb{X}} $ such that $ \| T_0 x_n \| > 1 - \frac{1}{n} $ and $ \| x_n - y \| \geq \epsilon_0 $ for any $ y \in M_{T_{0}}. $ Since $ \mathbb{X} $ is reflexive and $ \| x_n \| = 1 $ for each $ n, $ we must have, $ \{ x_n \} $ has a weakly convergent subsequence. Without loss of generality, let us assume that  $ x_n \stackrel{w}{\rightharpoonup} z_0 $ (say). Since $ T_0 $ is compact and norm is a continuous function, we have that, $ \| T_0 x_n \| \rightarrow \| T_0 z_0 \|. $ However, since $ 1 - \frac{1}{n} < \| T_0 x_n \| \leq 1 = \| T_0 \|, $ it follows that $ z_0 \in M_{T_{0}}. $ In particular, we have that, $ z_0 \in S_{\mathbb{X}}. $ Since $ \mathbb{X} $ is Kadets-Klee,  $ x_n \stackrel{w}{\rightharpoonup} z_0 $ and $ 1 = \| x_n \| = \| z_0 \| $ for each $ n, $ we must have, $ x_n \rightarrow z_0 \in M_{T_{0}}. $ However, this is clearly in contradiction with our assumption that $ \| x_n - y \| \geq \epsilon_0 $ for any $ y \in M_{T_{0}}. $ This contradiction completes the proof of the theorem.

\end{proof}

\begin{remark}
Dantas proved in \cite{Da} that if $ \mathbb{X} $ is reflexive and LUR then for any Banach space $ \mathbb{Y}, $ the pair $ (\mathbb{X},\mathbb{Y}) $ has sBPBp for compact operators. It is well-known that LUR implies Kadets-Klee property. Therefore, the analogous result proved by Dantas in \cite{Da} follows directly from our result. On the other hand, in \cite{PS}, Polak and Sims have given an example of a Banach space which is reflexive and Kadets-Klee but not LUR. This evidently illustrates that our result is an improvement of the corresponding result by Dantas in \cite{Da}.   
\end{remark}

\bibliographystyle{amsplain}

\end{document}